\theoremstyle{plain}
\newtheorem{theorem}[subsection]{Theorem}
\newtheorem{proposition}[subsection]{Proposition}
\newtheorem{lemma}[subsection]{Lemma}
\newtheorem{corollary}[subsection]{Corollary}
\newcommand{\C}{\mathbb C}
\newcommand{\R}{\mathbb{R}}
\journal{\ }
\begin{document}

\begin{frontmatter}

\title{Singular integrals and maximal functions:\\ the disk multiplier revisited}

\author[lbl]{Antonio C\'ordoba}
\ead{antonio.cordoba@uam.es}
\fntext[lbl]{The author is partially supported by the grant
MTM2011-22851 from the Ministerio de Ciencia e Innovaci\'{o}n (Spain).}

\address{ICMAT\\Departamento de Matem\'{a}ticas\\
Universidad Aut\'{o}no\-ma de Madrid
\\
28049 Madrid. Spain}

\begin{abstract}
Several estimates for singular integrals, maximal functions and the spherical summation operator are given in the spaces $L^p_{\text{\rm rad}}L^2_{\text{\rm ang}}(\R^n)$, $n\geq 2$. 
\end{abstract}

\begin{keyword}
singular integrals \sep 
disk multiplier \sep 
Fourier restriction theorems

\MSC 
42B20   
\sep
42B25   
\end{keyword}

\end{frontmatter}

\section{Introduction}

A well-known
 open problem in Fourier analysis is the Bochner-Riesz operator conjecture, which asserts the $L^p$-boundedness of the Fourier multipliers
\[
 \widehat{T_\alpha f}(\xi)=\big(1-|\xi|^2\big)_+^\alpha \widehat{f}(\xi)
\]
on $L^p(\R^n)$, so long as 
\[
 \frac{2n}{n+1+2\alpha}
<p< \frac{2n}{n-1-2\alpha}
\]
where $0<\alpha<(n-1)/2$ and
\[
 \widehat{f}(\xi)=\int_{\R^n}e^{-2\pi i x\cdot\xi}f(x)\; dx
\]
denotes the Fourier transform in $\R^n$. 

The problem is well understood in dimensions $n=1$ and $n=2$ (see \cite{1}, \cite{2}, \cite{3}, \cite{4}). But in higher dimensions, although there are several interesting results by many authors, it remains open.

Its relevance is due, on the one hand, to the very natural question being asked, but also because of its close connection with some other basic objects, namely the  so-called Kakeya maximal function, the restriction properties of the Fourier transform, or the covering properties satisfied by parallelepipeds in $\R^n$ having arbitrary directions and eccentricities. 

There is also the hope that obtaining deep understanding of the Bochner-Riesz operators could be a first step in the project of extending the classical Calder\'{o}n-Zygmund theory of singular integrals, or pseudodifferential operators, going beyond kernels whose singularities are located only at the origin or at infinity, as is demanded in several areas of number theory or PDEs.

\

In the extreme case, $\alpha=0$, the multiplier $T=T_0$ is given by the indicator function of the unit ball. By a remarkable result of C.~Fefferman \cite{5} we know that it is bounded only in the obvious case $p=2$, disproving the conjecture about the boundedness of $T$ in the range $2n/(n+1)<p<2n/(n-1)$.

In its proof Fefferman made use of the properties of the Kakeya sets in the plane (for every $N\gg 1$ there is a set whose measure is less than $1/\log N$ but containing a rectangle of dimensions $1\times 1/N$ on every direction), but also of  a previous result due to Y.~Meyer, who observed that the $L^p$-boundedness of $T$ implies a vector-valued
 control for Hilbert transforms in different directions of the space. More concretely:

Let
\begin{equation}\label{dht} H_{\omega} f(x)=\text{p.v.}
\int_{-\infty}^\infty \frac{f(x-t\omega)}{t}\; dt,
\qquad \omega\in S^{n-1},
\end{equation}
Then $T$ bounded on $L^p(\R^n)$ implies
\[
 \Big\|
\big(\sum|H_{\omega_j}f_j|^2\big)^{1/2}
 \Big\|_p
\lesssim
 \Big\|
\big(\sum|f_j|^2\big)^{1/2}
 \Big\|_p.
\]
(Throughout this paper the symbol $f\lesssim g$ means that there exists $C$ independent of $f$ and $g$ such that $|f(x)|\le C|g(x)|$ for all $x$. We will also write $f\lesssim_s g$ to denote that the involved constant depends upon the parameter $s$).

It was precisely that vector-valued
 inequality, the one disproved by Fefferman with the help of an appropriated Kakeya set, which one may now describe as the main enemy of the  Bochner-Riesz conjecture.

One way of defeating the enemy was introduced in \cite{6} (see also \cite{7} for an independent proof):
\[
T \text{ is bounded on }L^p_{\text{\rm rad}}L^2_{\text{\rm ang}}(\R^n) \text{ if and only if } 
\frac{2n}{n+1}<p<\frac{2n}{n-1},
\]
where the norm in $L^p_{\text{\rm rad}}L^2_{\text{\rm ang}}(\R^n)$ is given by the integral
\[
 \bigg(\int_0^\infty\Big(\int_{S^{n-1}}|f(r\theta)|^2\; d\sigma(\theta)
\Big)^{p/2}r^{n-1}\; dr\bigg)^{1/p}
\]
with  $d\sigma$ the uniform measure in the unit sphere and $r>0$, $\theta\in S^{n-1}$ are the polar coordinates in $\R^n$.

\

Throughout this paper the proof given in \cite{6} will be revisited, improving some of the arguments and estimates. In particular we will show that Meyer's lemma holds in the expected range $2n/(n+1)<p<2n/(n-1)$ so long as we substitute $L^p (\R^n)$ by
$L^p_{\text{\rm rad}}L^2_{\text{\rm ang}}(\R^n)$.

For every $1<p<\infty$, singular integrals (i.e. integral operators whose kernels are given by $\text{p.v.}\|x\|^{-n}\Omega(x/\|x\|)$, with $\int_{S^{n-1}}\Omega=0$) yield   bounded operators on $L^p_{\text{\rm rad}}L^2_{\text{\rm ang}}(\R^n)$. However, it is somehow surprising to realize that directional Hilbert transforms $H_\omega$ defined by \eqref{dht}
are bounded on $L^p_{\text{\rm rad}}L^2_{\text{\rm ang}}(\R^n)$ if and only if $2n/(n+1)<p<2n/(n-1)$.

To see that $p$ must necessarily lie in this range for the boundedness of $H_\omega$ it suffices to check the action of $H_\omega$ over the indicator function of a unit cube. However, the other implication is more involved and we will present two different proofs.

One of them is on the spirit of Meyer's lemma and makes use of the main result in \cite{6}. The second is based on properties of the \emph{universal Kakeya maximal function}:
\begin{equation}\label{ukmf}
 \mathcal Uf(x)=
\sup_{\substack{a,b>0\\ \omega\in S^{n-1}}}
\frac{1}{a+b}\int_{-a}^b \big|f(x+t\omega)\big|\; dt.
\end{equation}
With the help of Kakeya sets it is easy to see that $\mathcal U$ cannot be bounded on any space $L^p(\R^n)$, $1\le p<\infty$. However we will present a geometric argument to show that, acting on radial functions $f$, we have the estimate
\[
 \|\mathcal U f\|_p \lesssim_p 
\|f\|_p
\qquad\text{for $p>n$}.
\]
This was first proved in \cite{10}. Here we will present a new proof using the old \lq\lq bush" and \lq\lq brush" methods of \cite{11}, \cite{12} and \cite{23}. 
\

Throughout this paper several weighted estimates for singular integrals play a crucial role. In particular the fact, discovered in \cite{13}, that for any $s>1$, and any locally integrable function $f$, we have that
$\omega=\big( M(f^s)(x)\big)^{1/s}$ is a weight in the class $A_1$ such that
\[
 M\omega(x) \lesssim_s \omega(x)
\]
where $M$ denotes the Hardy-Littlewood maximal function.

However, this should not be a big surprise, because the main reason to ask the question raised in \cite{13} (circa 1973) was to produce a functional machine to reduce the Bochner-Riesz multiplier problem to the boundedness properties of the Kakeya maximal function, that is
 to the covering properties of parallelepipeds in $\R^n$ (see \cite{14}, \cite{15}). In fact the formula $\big(M(f^s)\big)^{1/s}$ yields perhaps the most interesting examples of $A_p$ weights. The other known cases are the radial powers $|x|^\alpha$, and in this paper we will make use of both classes. In particular we use that $|x|^\alpha\in A_p(\R)$ if and only if $-1<\alpha<p-1$. But we also present an argument showing how the theory for the weights $|x|^\alpha$ can also be deduced from the estimate

\[
M\Big(\big( M(f^s)\big)^{1/s}\Big) \lesssim_s \big( M(f^s)\big)^{1/s}.
\]

\noindent{\bf Acknowledgement.} I am grateful to Diego C\'ordoba, Fernando Chamizo and Keith Rogers for their useful comments and help in the preparation of the manuscript. 

\section{Singular integrals in $L^p_{\text{\rm rad}}L^2_{\text{\rm ang}}
(\R^n)$}

Consider the singular integral operator
\[
 Tf(x)=\text{p.v.}\int_{\R^n}
f(x-y)\frac{\Omega(y)}{|y|^n}dy
\]
where $\Omega$ is smooth enough and has zero mean value
$\int_{S^{n-1}}\Omega(y)d\sigma(y)=0$.

\begin{theorem}\label{singular}
$T$ extends to a bounded operator from $L^p_{\text{\rm rad}}L^2_{\text{\rm ang}}(\R^n)$ to itself when $1<p<\infty$.
\end{theorem}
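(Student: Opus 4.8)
The plan is to realize the $L^p_{\mathrm{rad}}L^2_{\mathrm{ang}}$ norm through a family of \emph{radial} test weights and then feed those weights into the classical weighted $L^2$ theory for Calder\'on--Zygmund operators, the weights being produced by a Rubio de Francia iteration. First I would dualize: Cauchy--Schwarz on $S^{n-1}$ followed by H\"older in $r$ gives, for Schwartz $f,g$,
\[
|\langle Tf,g\rangle|\le\int_0^\infty\|f(r\cdot)\|_{L^2(S^{n-1})}\,\|T^{*}g(r\cdot)\|_{L^2(S^{n-1})}\,r^{n-1}\,dr\le\|f\|_{L^p_{\mathrm{rad}}L^2_{\mathrm{ang}}}\,\|T^{*}g\|_{L^{p'}_{\mathrm{rad}}L^2_{\mathrm{ang}}},
\]
so $T$ is bounded on $L^p_{\mathrm{rad}}L^2_{\mathrm{ang}}$ iff $T^{*}$ is bounded on $L^{p'}_{\mathrm{rad}}L^2_{\mathrm{ang}}$; since the adjoint kernel $\mathrm{p.v.}\,|y|^{-n}\overline{\Omega(-y/|y|)}$ is again smooth with zero mean, it suffices to treat $2\le p<\infty$.

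For such $p$ set $q=p/2\ge1$. Writing $G(r)=\|Tf(r\cdot)\|_{L^2(S^{n-1})}$ and using $L^{q}$--$L^{q'}$ duality for the measure $r^{n-1}\,dr$ on $(0,\infty)$,
\[
\|Tf\|_{L^p_{\mathrm{rad}}L^2_{\mathrm{ang}}}^{2}=\big\|G^{2}\big\|_{L^{q}((0,\infty),\,r^{n-1}dr)}=\sup\Big\{\int_{\R^n}|Tf(x)|^{2}w(|x|)\,dx:\ w\ge0,\ \|w\|_{L^{q'}((0,\infty),\,r^{n-1}dr)}\le1\Big\}.
\]
Now fix such a $w$. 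Since the Hardy--Littlewood maximal operator $M$ on $\R^n$ maps radial functions to radial functions and is bounded on $L^{q'}(\R^n)$ (here $q'>1$) with norm $B$, the function $W=\sum_{j\ge0}(2B)^{-j}M^{j}\big(w(|\cdot|)\big)$ is radial, dominates $w(|\cdot|)$, has $\|W\|_{L^{q'}(\R^n)}\le 2\|w(|\cdot|)\|_{L^{q'}(\R^n)}$, and satisfies $MW\le 2B\,W$; that is, $W\in A_1(\R^n)$ with $[W]_{A_1}\le 2B$ uniformly in $w$.

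Then the Coifman--Fefferman weighted estimate applied to the Calder\'on--Zygmund operator $T$ with the $A_1\subset A_2$ weight $W$, followed by H\"older in $r$, Cauchy--Schwarz on $S^{n-1}$, and the elementary identity $\|W\|_{L^{q'}((0,\infty),r^{n-1}dr)}\simeq\|W\|_{L^{q'}(\R^n)}$ for radial $W$, yields
\[
\int_{\R^n}|Tf|^{2}w(|x|)\,dx\le\int_{\R^n}|Tf|^{2}W\,dx\ \lesssim\ \int_{\R^n}|f|^{2}W\,dx\ \lesssim\ \|f\|_{L^p_{\mathrm{rad}}L^2_{\mathrm{ang}}}^{2},
\]
with implied constant depending only on $[W]_{A_2}\le 2B$ (hence on $n,p$), not on $w$. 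Taking the supremum over $w$ gives $\|Tf\|_{L^p_{\mathrm{rad}}L^2_{\mathrm{ang}}}\lesssim\|f\|_{L^p_{\mathrm{rad}}L^2_{\mathrm{ang}}}$ for $2\le p<\infty$, the range $1<p<2$ follows by the duality step, and a density argument removes the Schwartz assumption.

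The only structural input is that $M$ preserves radiality and is bounded on \emph{every} $L^{q'}(\R^n)$ with $q'>1$, which is precisely what forces the full range $1<p<\infty$ and is the point where smooth singular integrals diverge from the directional Hilbert transforms of \eqref{dht}, whose analogous ``radial maximal operator'' is Kakeya-type and unbounded. I expect the delicate points to be purely bookkeeping: checking that $[W]_{A_1}$, and hence the Coifman--Fefferman constant, stays bounded uniformly in $w$ so that the supremum is finite; verifying $\|W\|_{L^{q'}(r^{n-1}dr)}\simeq\|W\|_{L^{q'}(\R^n)}$; and noting that the endpoint $q=1$ (that is, $p=2$) is trivial, being nothing but $L^2(\R^n)$ boundedness.
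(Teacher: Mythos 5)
Your proof is correct, and its architecture is the same as the paper's: reduce to $2\le p<\infty$ by duality, dualize the radial $L^{p/2}$ norm of $|Tf|^2$ against nonnegative radial weights $w$ with $\|w\|_{L^{q'}(r^{n-1}dr)}\le1$, majorize $w$ by a radial $A_1$ weight of comparable $L^{q'}$ norm, and invoke a weighted $L^2$ bound for the singular integral. The one substantive difference is the choice of majorant: the paper feeds $w$ directly into the C\'ordoba--Fefferman inequality $\int|Tf|^2g\lesssim_s\int|f|^2(M(g^s))^{1/s}$ from \cite{13}, whose right-hand weight $(M(w^s))^{1/s}$ is the $A_1$ majorant and whose $L^{q'}$ control costs the additional constraint $1<s<q'$; you instead run the Rubio de Francia algorithm $W=\sum_j(2B)^{-j}M^jw$ and then apply the Coifman--Fefferman $A_2$ estimate. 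The two devices are interchangeable here (both hinge only on $M$ preserving radiality and being bounded on $L^{q'}$, $q'>1$), your version slightly streamlines the bookkeeping by avoiding the auxiliary exponent $s$, while the paper's version is the one that generalizes to the directional operators later in the text, where $M$ must be replaced by the universal Kakeya maximal function and the $(M(g^s))^{1/s}$ formulation is the natural one.
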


\begin{proof}
It suffices to prove the theorem when $2\le p<\infty$ because then the case $1<p\le 2$ will follow by duality.

Given $f\in L^p_{\text{\rm rad}}L^2_{\text{\rm ang}}
(\R^n)$ and $g\in\mathcal S(\R)$, let us consider
\[
 I(f,g)=
\int_0^\infty \int_{S^{n-1}}
\big|Tf(r,\theta)\big|^2g(r)r^{n-1}\; d\sigma(\theta)dr
=
\int_{\R^n}
\big|Tf(x)\big|^2g\big(|x|\big)\; dx.
\]
By \cite{13}, we have
\[
 I(f,g)\lesssim_s
\int_{\R^n}
\big|f(x)\big|^2\big(Mg^s(x)\big)^{1/s}\; dx
\]
for $1<s<\infty$, where $Mg^s$ denotes the Hardy-Littlewood maximal operator applied to the function $g^s(x)=\big(g(|x|)\big)^s$. But
\[
 Mg^s(x)
=
\sup_{x\in B}
\frac{1}{|B|}
\int_B g^s(y)\; dy
\]
is also radial. Therefore
\begin{eqnarray*}
 I(f,g)&\lesssim_s&
\int_0^\infty \int_{S^{n-1}}
\big|f(r,\theta)\big|^2 \big(Mg^s(r)\big)^{1/s} r^{n-1}\; d\sigma(\theta)dr
\\
&\lesssim_s&
\|f\|_{L^p_{\text{\rm rad}}L^2_{\text{\rm ang}}
(\R^n)}
\Big(
\int_0^\infty 
\big(Mg^s(r)\big)^{q/s}r^{n-1}\; dr
\Big)^{1/q}
\end{eqnarray*}
so long as $1<s<q$, where $2/p+1/q=1$.

Taking the supremum over all such $g$ so that $\int_0^\infty g^q(r)r^{n-1}\; dr\le 1$, we obtain the proof of Theorem~\ref{singular}.
\end{proof}

The theorem above is somehow the result that one may expect, but the following is, perhaps, more surprising. Let us consider the Hilbert transform $H_\omega$ in the direction of $\omega\in S^{n-1}$, defined in \eqref{dht}. 

\begin{theorem}\label{hbounded}
$H_\omega$ is bounded on 
$L^p_{\text{\rm rad}}L^2_{\text{\rm ang}}
(\R^n)$
if and only if 
$2n/(n+1)<p<2n/(n-1)$.
\end{theorem}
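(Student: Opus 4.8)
\emph{Necessity.} The plan for the "only if" direction is the one the authors hint at: test $H_\omega$ on $f=\chi_Q$, where $Q$ is a unit cube with one pair of faces orthogonal to $\omega$. Fix $\omega=e_n$, say. A direct computation shows that for $x$ far from $Q$ in the $e_n$-direction, $H_{e_n}\chi_Q(x)$ decays like $1/|x_n|$ (integrating $1/t$ across the unit-length slice), so on the annulus $|x|\sim R$ the angular $L^2$-norm of $H_{e_n}\chi_Q$ is comparable to $R^{-1}\cdot R^{-(n-1)/2}$ times the square root of the measure of the relevant angular cap, which one checks is of size $\sim R^{-1}$ after accounting for the $r^{n-1}$ weight. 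Raising to the $p$-th power and integrating $r^{n-1}\,dr$, the tail $\int^\infty R^{-p}\,R^{-(n-1)p/2}\cdot(\text{cap size})\,R^{n-1}\,dr$ converges precisely when $p<2n/(n-1)$; the behavior near the origin (or the dual obstruction) gives $p>2n/(n+1)$. I would present this carefully since it pins down the range and is elementary.

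\emph{Sufficiency, first proof (Meyer-type).} Assume $2n/(n+1)<p<2n/(n-1)$. The strategy is to deduce boundedness of a single $H_\omega$ on $L^p_{\mathrm{rad}}L^2_{\mathrm{ang}}$ from the main result of \cite{6}, namely that the ball multiplier $T$ is bounded on this same space in exactly this range. The classical mechanism (de Leeuw / Meyer) is that $H_\omega$ arises as a transference or limiting modulation of $T$: conjugating $T$ by the dilations that blow up the unit ball and translating its boundary to a hyperplane with normal $\omega$ turns the multiplier $\chi_{|\xi|\le 1}$ into $\chi_{\xi\cdot\omega\le 0}$, which up to a bounded additive constant is $\tfrac12(1+\operatorname{sgn}(\xi\cdot\omega))$, i.e.\ the multiplier of $H_\omega$. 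The point to verify is that these operations are compatible with the $L^p_{\mathrm{rad}}L^2_{\mathrm{ang}}$ norm: rotations act isometrically on it (the angular part is $L^2$ over the full sphere), and the dilation-and-translation limit must be taken so that the radial $L^p$ structure is preserved in the limit. I expect this compatibility check—showing the half-space multiplier is recovered \emph{in the relevant norm}, not just in $L^p(\R^n)$—to be the main technical point of this argument.

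\emph{Sufficiency, second proof (maximal function).} The alternative route, which I would also sketch, is to dominate $H_\omega$ pointwise, or at least control its $L^p_{\mathrm{rad}}L^2_{\mathrm{ang}}$ norm, by the universal Kakeya maximal function $\mathcal U$ of \eqref{ukmf} together with a Calderón--Zygmund piece. Concretely, split the kernel $1/t$ into its part on $|t|\le\delta(x)$ and $|t|>\delta(x)$ for a suitable $\delta$ adapted to $|x|$: the local part is a standard singular integral (bounded on $L^p_{\mathrm{rad}}L^2_{\mathrm{ang}}$ for all $1<p<\infty$ by Theorem~\ref{singular}), while the tail is dominated by $\mathcal U f$ evaluated at $x$. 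Since $f\in L^p_{\mathrm{rad}}L^2_{\mathrm{ang}}$ is radial in modulus only after taking the angular $L^2$ average, one applies the radial estimate $\|\mathcal U g\|_p\lesssim_p\|g\|_p$ for $p>n$ to the radial function $g(r)=\big(\int_{S^{n-1}}|f(r\theta)|^2\,d\sigma\big)^{1/2}$, after checking that the angular averaging commutes appropriately with the one-dimensional maximal average along $\omega$ (Minkowski's inequality in the angular variable). This only yields $p>n$ directly, so to reach the full range one combines it with interpolation against the easy $L^2$ endpoint and the Calderón--Zygmund part, plus duality for $p<2$. The main obstacle here is the interplay between the angular $L^2$ average and the directional average defining $\mathcal U$—making the domination by a \emph{radial} maximal operator rigorous.
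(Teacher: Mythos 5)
Your necessity argument and your first sufficiency argument are essentially the paper's own proof. For necessity the paper tests on the unit cube exactly as you propose; note only that the direct computation $\int_2^\infty r^{n-p(n+1)/2}\,dr/r$ at infinity forces $p>2n/(n+1)$ (not $p<2n/(n-1)$, which is the bound obtained by duality), so you have the two endpoints attributed to the wrong halves of the argument. For the first sufficiency proof, the "compatibility check" you flag as the main technical point is handled in the paper by two one-line observations: the mixed norm depends only on $|f(x)|$, so frequency translation (spatial modulation by $e^{2\pi i x\cdot\xi_k}$) is an \emph{isometry} of $L^p_{\text{\rm rad}}L^2_{\text{\rm ang}}$, and dilation scales the norm by $\delta^{-n/p}$, so the ball multipliers $T_R$ are bounded uniformly in $R$; one then writes $\chi_{\{\xi\cdot\omega\le 0\}}=\lim_k\chi_{\{\|\xi-\xi_k\|\le R_k\}}$ and passes to the limit by dominated convergence. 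This is exactly your outline, made precise.

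Your second sufficiency proof, however, has a genuine gap, and the scheme cannot work as stated. You propose to dominate the tail of $H_\omega f$ pointwise by $\mathcal U$ applied to the radial function $g(r)=\big(\int_{S^{n-1}}|f(r\theta)|^2\,d\sigma\big)^{1/2}$ and then invoke $\|\mathcal U g\|_p\lesssim\|g\|_p$ for $p>n$. Even granting the (unjustified) pointwise domination, the exponent bookkeeping is fatal: the radial Kakeya bound requires $p>n$, while the target range is $p<2n/(n-1)$, and for $n\ge 4$ one has $2n/(n-1)<n$, so the two ranges are disjoint; interpolating with the $L^2$ endpoint cannot reach exponents for which you have no second estimate. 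The paper's second proof applies $\mathcal U$ on the \emph{dual} side, not to $f$: by the C\'ordoba--Fefferman weighted inequality, $\int|H_\omega f|^2 g(|x|)\,dx\lesssim_s\int|f|^2\big(M_\omega g^s\big)^{1/s}dx$, and since $g$ is radial one has $M_\omega g^s\le \mathcal U g^s$ pointwise for \emph{every} direction $\omega$. H\"older with $2/p+1/q=1$ then requires $\|\mathcal U g^s\|_{q/s}$ with $q/s>n$, and the condition $q>n$ on the dual exponent is \emph{exactly} equivalent to $p<2n/(n-1)$. That duality step --- putting the Kakeya maximal function on the weight $g\in L^q(r^{n-1}dr)$ rather than on $f$ --- is the missing idea; without it the universal Kakeya bound lands in the wrong range of exponents.
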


\begin{proof}
Because the rotational symmetry, it suffices to show the conclusion for $\omega=e_n=(0,0,\dots, 1)$. 

To see the \lq\lq only if" part we take $f$ to be the indicator function of the unit cube and observe that
\[
 \big|H_{e_n} f(x_1,x_2,\dots, x_{n-1},y)\big|\gtrsim\frac{1}{|y|}
\]
for $|x_j|\le 1/2$, $j=1,2,\dots, n-1$, $|y|\ge 2$.

Then an elementary computation yields the inequality
\begin{eqnarray*}
\int_0^\infty \int_{S^{n-1}}
\Big(
\big|H_{e_n}f(r,\theta)\big|^2\; d\sigma(\theta)
\Big)^{p/2}  
r^{n-1}\;dr
&\gtrsim&
\int_2^\infty
\Big(\big(\frac{1}{r}\big)^2\frac{1}{r^{n-1}}\Big)^{p/2}
r^{n-1}\;dr
\\
&=&
\int_2^\infty
r^{n-p(n+1)/2}\frac{dr}{r}.
\end{eqnarray*}
therefore the boundedness of $H_{e_n}$ implies that $p>2n/(n+1)$. Again the other bound $p<2n/(n-1)$ follows by duality.

\

To prove the \lq\lq if'' part let us observe first that the 
$L^p_{\text{\rm rad}}L^2_{\text{\rm ang}}
(\R^n)$
norm is well-behaved under dilations and it is preserved under multiplication by characters $e^{ix\cdot \eta}$:

If $f_\delta(x)=f(\delta x)$ then 
\[
 \|f_\delta\|_{ L^p_{\text{\rm rad}}L^2_{\text{\rm ang}}
(\R^n) }
= \delta^{-n/p}\|f\|_{ L^p_{\text{\rm rad}}L^2_{\text{\rm ang}}
(\R^n) }.
\]
Therefore if $\widehat{T_Rf}(\xi)=\chi_{B_R}(\xi)\widehat{f}(\xi)$, where $B_R$ is the ball of radius $R$, we have
\[
 T_Rf(x)=T_1f_{1/R}(Rx)
\]
which produces the estimate
\begin{eqnarray*}
\|T_Rf\|_{L^p_{\text{\rm rad}}L^2_{\text{\rm ang}}
(\R^n)}
=R^{-n/p}\|T_1f_{1/R}\|_{L^p_{\text{\rm rad}}L^2_{\text{\rm ang}}
(\R^n)}                            
&\lesssim&
R^{-n/p}\|f_{1/R}\|_{L^p_{\text{\rm rad}}L^2_{\text{\rm ang}}
(\R^n)}                            
\\
&\lesssim&
\|f\|_{L^p_{\text{\rm rad}}L^2_{\text{\rm ang}}
(\R^n)}                            
\end{eqnarray*}
uniformly on $R>0$, so long as $2n/(n+1)<p<2n/(n-1)$. 

Next we consider the multiplier, $i\;\text{sign}(\xi\cdot \omega)$, corresponding to the Hilbert transform $H_\omega$ and we observe that its boundedness properties are equivalent to those of the Fourier multiplier operator associates to the indicator function of hyperplanes $\{\xi\;:\; \xi\cdot\omega\le t\}$.

Given a function $f$ (chosen so that its Fourier transform $\widehat{f}$ has compact support) the expression
\[
 e^{-2\pi ix\cdot \xi_k}
T_{R_k}\big(
 e^{2\pi iy\cdot \xi_k}f(y)\big)(x)
\]
corresponds to the Fourier multiplier given by the indicator function of the ball of radius $R_k$ centered at $\xi_k$. Choosing a convenient sequence of points $\xi_k$ and radius $R_k$ we can write
\[
 \chi_{\{\xi\cdot\omega\le 0\}}(\xi)\widehat{f}(\xi)
=
\lim_{k\to\infty}
 \chi_{\|\xi-\xi_k\|\le R_k}(\xi)\widehat{f}(\xi).
\]
Then dominated convergence, together with the uniform bounds of $T_{R_k}$ in 
$L^p_{\text{\rm rad}}L^2_{\text{\rm ang}}
(\R^n)$,
$2n/(n+1)<p<2n/(n-1)$, allows to finish the proof. 
\end{proof}

\noindent A different proof of Theorem~\ref{hbounded} is the following:

We start with the integral 
\[
 I(f,g)=
\int_0^\infty \int_{S^{n-1}}
\big|H_\omega f(r,\theta)\big|^2g(r)r^{n-1}\; d\sigma(\theta)dr
\]
We have
\[
 I(f,g)
= 
\int_{\R^n}
\big|H_\omega f(x)\big|^2g\big(|x|\big)\; dx
\lesssim_s
\int_{\R^n}
\big|f(x)\big|^2\big(M_\omega g^s(x)\big)^{1/s}\; dx
\]
where $M_\omega$ denotes the one-dimensional Hardy-Littlewood maximal function in the direction of $\omega$ and $s>1$. 

As $g^s(x)=g^s(|x|)$ is radial one may consider its universal Kakeya maximal function, defined in \eqref{ukmf}, to get the upper bound
\[
 I(f,g)
\lesssim_s
\int_{\R^n}|f(x)|^2 
\big(\mathcal U g^s(x)\big)^{1/s}\; dx
\]
that is
\[
\lesssim_s
\bigg(\int_0^\infty \int_{S^{n-1}}
\Big(
\big|f(r,\theta)\big|^2\; d\sigma(\theta)
\Big)^{p/2}  
r^{n-1}\;dr
\bigg)^{2/p}
\Big(\int_{\R^{n}}
\big( \mathcal U g^s(x)\big)^{q/s}\; dx
\Big)^{1/q}
\]
where $2/p+1/q=1$. To finish we make use of the fact that $\mathcal U$ is bounded on $L^{p}(\R^n)$, $p>n$, when restricted to radial functions \cite{10}:

Observe that $2n/(n+1)<p<2n/(n-1)$ yields 
\[
\frac 1q=1-\frac 2p<1-\frac{n-1}n=\frac 1n.
\]
Therefore given $q>n$ we choose $s>1$ so that $q/s>n$ to obtain
\[
\int_{\R^{n}}
\big( \mathcal U g^s(x)\big)^{q/s}\; dx
\lesssim_s
\int_{\R^n}|g(x)|^q\; dx=
\int_{0}^\infty |g(r)|^q
r^{n-1}\; dr
\]
which coincides with $\|f\|_{  L^p_{\text{\rm rad}}L^2_{\text{\rm ang}}
(\R^n) }$.

\begin{corollary}[Meyer's lemma]\label{meyer}
Given $\{\theta_j\}$, a countable family of directions in $\R^n$, and the corresponding directional Hilbert transforms,
$
 H_jf=H_{\theta_j}f
$, the following inequality holds:
\[
 \Big\|
\big(\sum |H_jf_j|^2\big)^{1/2}
\Big\|_{  L^p_{\text{\rm rad}}L^2_{\text{\rm ang}}
(\R^n) }
\lesssim
 \Big\|
\big(\sum |f_j|^2\big)^{1/2}
\Big\|_{  L^p_{\text{\rm rad}}L^2_{\text{\rm ang}}
(\R^n) }
\]
where $2n/(n+1)<p<2n/(n-1)$.
\end{corollary}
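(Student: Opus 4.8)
The plan is to run the second proof of Theorem~\ref{hbounded} almost verbatim, simply inserting a sum over $j$, since the weighted $L^2$ estimate for $H_\omega$ that drives that argument is \emph{additive} over the family $\{\theta_j\}$ in a way that an $L^p$-operator bound is not. First I would reduce to the range $2\le p<2n/(n-1)$: the exponent interval $2n/(n+1)<p<2n/(n-1)$ is stable under $p\mapsto p'$, and the componentwise adjoint of $\{f_j\}\mapsto\{H_{\theta_j}f_j\}$ on $L^{p'}_{\text{\rm rad}}L^2_{\text{\rm ang}}(\R^n;\ell^2)$ is $\{g_j\}\mapsto\{-H_{\theta_j}g_j\}$, an operator of exactly the same type on the dual space; so a bound for $p'\ge 2$ yields the bound for $p\le 2$ by duality, while $p=2$ is the trivial orthogonality estimate $\sum_j\|H_{\theta_j}f_j\|_2^2\le\sum_j\|f_j\|_2^2$. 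Hence I may assume $2<p<2n/(n-1)$, and I fix $q$ by $2/p+1/q=1$, so $1<q<\infty$.

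Write $F=\big(\sum_j|f_j|^2\big)^{1/2}$ and $G=\big(\sum_j|H_{\theta_j}f_j|^2\big)^{1/2}$. Since $p/2>1$, $L^{p/2}$-duality on the half-line with weight $r^{n-1}\,dr$ lets me express $\|G\|_{L^p_{\text{\rm rad}}L^2_{\text{\rm ang}}(\R^n)}^2$ as the supremum of $\int_{\R^n}G(x)^2 g(|x|)\,dx$ over nonnegative radial $g$ with $\int_0^\infty g(r)^q r^{n-1}\,dr\le 1$. For such a $g$, I would apply the weighted $L^2$ inequality of \cite{13} fiberwise on each line parallel to $\theta_j$ and then use Fubini in the transverse variables, getting $\int|H_{\theta_j}f_j|^2 g(|x|)\,dx\lesssim_s\int|f_j|^2\big(M_{\theta_j}g^s\big)^{1/s}\,dx$ for any $s>1$, with $g^s(x)=(g(|x|))^s$ radial and $M_{\theta_j}$ the one-dimensional maximal operator in the direction $\theta_j$; summing in $j$ and using the pointwise bound $M_{\theta_j}g^s\le\mathcal Ug^s$ produces $\int_{\R^n}G^2 g(|x|)\,dx\lesssim_s\int_{\R^n}F^2\big(\mathcal Ug^s\big)^{1/s}\,dx$. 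Because $g^s$ is radial, $\mathcal Ug^s$ is radial, so in polar coordinates H\"older with exponents $p/2$ and $q$ against $r^{n-1}\,dr$ gives $\int_{\R^n}G^2 g(|x|)\,dx\lesssim_s\|F\|_{L^p_{\text{\rm rad}}L^2_{\text{\rm ang}}(\R^n)}^2\,\big(\int_{\R^n}(\mathcal Ug^s)^{q/s}\,dx\big)^{1/q}$. Finally, $p<2n/(n-1)$ forces $1/q=1-2/p<1/n$, i.e. $q>n$; choosing once and for all $s>1$ with $q/s>n$ and invoking the boundedness of $\mathcal U$ on $L^{q/s}(\R^n)$ restricted to radial functions \cite{10}, the last factor is $\lesssim_s\big(\int_0^\infty g(r)^q r^{n-1}\,dr\big)^{1/q}\le 1$. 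Taking the supremum over $g$ completes the proof.

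The step I expect to require the most care is the passage from $H_{\theta_j}$ to the weighted estimate with $M_{\theta_j}$: one must check that the constant in $\int|H_\omega h|^2 w\lesssim_s\int|h|^2(M_\omega w^s)^{1/s}$ is independent of the direction $\omega$, which is immediate once it is recognized as the one-dimensional instance of \cite{13} applied on the fibers plus Fubini. Everything else—additivity in $j$, the trivial domination $M_\omega\le\mathcal U$, the radiality of $\mathcal Ug^s$, and the exponent arithmetic $q>n$—is already present in the second proof of Theorem~\ref{hbounded}. In short, there is no genuinely new obstacle here beyond the observation that the $L^2$-weighted route, unlike the $L^p$ route that Meyer's lemma classically demands, tensorizes at no cost.
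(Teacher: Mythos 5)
Your proposal is correct and follows essentially the same route as the paper: the paper's proof of Corollary~\ref{meyer} is exactly the second proof of Theorem~\ref{hbounded} with a sum over $j$ inserted, using the weighted $L^2$ inequality of \cite{13} in each direction, the domination $M_{\theta_j}g^s\le\mathcal U g^s$, H\"older, and the radial boundedness of $\mathcal U$ on $L^{q/s}$ with $q/s>n$. Your write-up is in fact slightly more careful than the paper's (which contains a typo, writing $\big|\sum_j f_j\big|^2$ where $\sum_j|f_j|^2$ is meant), and the duality reduction and direction-independence of the weighted constant are handled correctly.
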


\begin{proof}
Once more it is enough to prove it when $2\le p<2n/(n-1)$ and then use duality to cover the other cases.

Given $g\in\mathcal S(\R)$ we consider the integrals
\[
\sum_j
\int_0^\infty \int_{S^{n-1}}
\big|H_j f_j(r,\theta)\big|^2g(r)r^{n-1}\; d\sigma(\theta)dr
=
\sum_j\int_{\R^n}
\big|H_j f_j(x)\big|^2g\big(|x|\big)\; dx
\]
that is 
\[
\lesssim_s
\sum_j
\int_{\R^n}|f_j(x)|^2 
\big(\mathcal U g^s(x)\big)^{1/s}\; dx
\]
and
\[
\lesssim_s
\bigg|\int_0^\infty \int_{S^{n-1}}
\Big(
\big|\sum_jf_j(r,\theta)\big|^2\; d\sigma(\theta)
\Big)^{p/2}  
r^{n-1}\;dr
\bigg|^{2/p}
\Big|\int_{\R^{n}}
\big( \mathcal U g^s(x)\big)^{q/s}\; dx
\Big|^{1/q}
\]
and the proof follows for the same reasons given in Theorem~\ref{hbounded}.
\end{proof}

\

We finnish this section by presenting a new  proof, using old techniques,  of the boundedness of the general Kakeya maximal function acting on radial functions. 

Related with these problems are maximal functions associated to vector fields in $\R^n$. Given a continuous field of directions, $v(x)$, and a positive valued real function $p(x)$, let us consider
\[
 Mf(x)=\sup_{0<r,t<p(x)}
\frac{1}{r+t}
\int_{-r}^t\big|f(x+sv(x))\big|\; ds.
\]
As it is usual in differentiation theory, the behavior of those maximal operators produces quantitative versions of the Lebesgue's differentation theorem. In general the problem is rather difficult but E.~Stein and S.~Wainger \cite{21}, and also several other authors \cite{9}, have created a theory which yields sufficient conditions for boundedness.

However, for our discussion the following two examples are of special relevance:
\begin{enumerate}
 \item[\bf 1)] 
$v(x)=x/\|x\|$, $p(x)=\infty$. In this case $M$ is bounded on $L^p(\R^n)$ if and only if $p> n$.
\begin{enumerate}
 \item[\bf 1*)] 
$v(x)=x/\|x\|$, $p(x)=\frac 12\|x\|$. Then $M$ is of weak-type (1,1).
\end{enumerate}
 \item[\bf 2)] 
In $\R^2$, 
$v(x,y)=(-y,x)/\|(x,y)\|$, $p(x)=\infty$. Then $M$ is bounded on $L^p(\R^2)$, $1<p<\infty$.
\end{enumerate}
The proof of {\bf 1)} uses polar coordinates together with the fact that $|x|^{n-1}$ is an $A_p$ weight in $\R^n$ for $p>n$:
\begin{eqnarray*}
\int_{\R^n}
\big|M f(x)\big|^p\; dx
&=&
 \int_{S^{n-1}}\int_0^\infty
\big|M f(r,\theta)\big|^pr^{n-1}\; drd\sigma(\theta)
\\
&\lesssim_p&
 \int_{S^{n-1}}\int_0^\infty
\big|f(r,\theta)\big|^pr^{n-1}\; drd\sigma(\theta)
\end{eqnarray*}
so long as $p>n$. The proof of 
{\bf 1*)}
is left as an exercise to the reader. 

Regarding
{\bf 2)}
let us consider in $\R^2$ identified with $\C$, the change of variables $w=\log z$ mapping the straight line $\rho e^{i\theta}(1+ir)$, $r\in\R$, into the curve $\Gamma:\log\rho+i\theta+\log(1+ir)$. But $\Gamma$ is the result of translating the fixed curve $w=\log(1+ir)$ to the point $\log\rho+i\theta$. Therefore, in the $w$-plane, the maximal function is realized as
\[
 \widehat{M}g(w)=
\sup_{0<t,r}
\frac{1}{r+t}
\int_{-r}^t\big|g(w+\gamma(s))\big|\; ds,
\]
where $\gamma(s)=\log(1+is)$. In this setting we can apply the results of E.~Stein and S.~Wainger \cite{21} to finish the proof.

\

The result for the vector field {\bf 2)} can be extended to higher dimensions if we restrict our attention to radial functions, because, in that case, we can take any direction perpendicular to the position vector $x$ without changing the value of the maximal function there.

That is, considering the product structure $\R^n=\R\times \R^{n-1}$ the evaluation of the maximal function at the point $(x_1,x_2,\dots, x_n)$ can be obtained as the evaluation of its $n-1$-dimensional version acting on the slice $\{x_1\}\times \R^{n-1}$. Then an induction argument together with Fubini's theorem gives the boundedness for every $p>1$.

Our proof for $\mathcal U$ will be then modeled upon these two extreme examples {\bf 1)} and {\bf 2)}, and it will make use of the geometry of parallelepipeds in $\R^n$ to treat the intermediate case: 

In $\R^n$, $n\ge 2$, the universal Kakeya maximal function \eqref{ukmf} is unbounded on every $L^p(\R^n)$, $1\le p<\infty$, as an appropriated Kakeya set argument easily shows. However, acting on radial functions $f$ we have

\begin{proposition}\label{urad}
$\|\mathcal U f\|_p\lesssim \|f\|_p$ if $p>n$ and $f$ is radial. 
\end{proposition}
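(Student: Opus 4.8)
\noindent{\bf Plan of proof.} The idea is that the radial hypothesis collapses the Kakeya geometry of $\mathcal U$ into a one--dimensional weighted maximal estimate. Write $f(x)=g(|x|)$. If $\ell$ is a line at distance $d$ from the origin, parametrised by signed arclength $s$ measured from the foot of the perpendicular, then $f|_\ell(s)=g(\sqrt{d^2+s^2})$, and a point $x$ with $|x|=r$ lies on $\ell$ at $|s|=\sqrt{r^2-d^2}$, every $d\in[0,r]$ being realised by some such line through $x$. Since $s\mapsto\sqrt{d^2+s^2}$ is even, this gives
\[
\mathcal Uf(x)=\sup_{0\le d\le r}\ \sup_{I\ni s_d}\ \frac1{|I|}\int_I g\big(\sqrt{d^2+s^2}\big)\,ds ,\qquad r=|x|,\ \ s_d:=\sqrt{r^2-d^2},
\]
the inner supremum over intervals $I\subseteq\R$; in particular $\mathcal Uf$ is radial, and the Proposition is equivalent to $\int_0^\infty(\mathcal Uf(r))^p\,r^{n-1}dr\lesssim\int_0^\infty g(r)^p\,r^{n-1}dr$ for $p>n$.

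\noindent Next I would pass to the variable $u=\sqrt{d^2+s^2}$, turning the average over $I$ into an average of $g$ over an interval of $u$--values against the density $u\,(u^2-d^2)^{-1/2}$, and split the supremum by the position and size of $I$ relative to $s_d$. Two things can happen. If $I$ stays at distance $\gtrsim s_d$ from $s=0$ (and $d$ is not too close to $r$), the change of variables has bounded distortion and the average is $\lesssim M_{\mathrm{HL}}g$ at a point comparable to $r$; extending $g$ evenly, such pieces are controlled in $L^p(r^{n-1}dr)$ precisely when $p>n$, since that is the range in which $|s|^{n-1}\in A_p(\R)$ --- the mechanism of example {\bf 1)}. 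The remaining pieces are those where the averaging interval reaches the value $u=d$ (the ``foot'' radius), so that the integrable singularity of $u(u^2-d^2)^{-1/2}\asymp\sqrt{d/(u-d)}$ comes into play; this covers both the near--tangent case $d\asymp r$ (the benign example {\bf 2)}) and the genuinely intermediate one. Reducing the relevant interval to one of the form $[d,U]$ and retaining the part with $u\in[d,2d]$, each such piece is at most a constant times
\[
\frac{1}{L}\int_d^{2d} g(u)\sqrt{\frac{d}{u-d}}\,du=\frac{\Psi(d)}{L},\qquad \Psi(d):=\sqrt d\int_0^d g(d+t)\,t^{-1/2}\,dt ,
\]
with $L\gtrsim\max(d,s_d)\gtrsim r$; so matters reduce to bounding $N g(r):=r^{-1}\sup_{0<d\le r}\Psi(d)$ --- together with an analogous tangential term, in which $\Psi$ is suitably truncated and $r^{-1}$ replaced by $(r^2-d^2)^{-1/2}$ --- in $L^p(r^{n-1}dr)$.

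\noindent For $N$, Hölder against the weight $(d+t)^{n-1}$, legitimate because $p>n\ge 2$ forces $p'<2$ and hence $\int_0^d t^{-p'/2}dt<\infty$, yields $\Psi(d)\lesssim_p d^{1-n/p}G(d)$ with $G(d):=\big(\int_d^{2d}g^p u^{n-1}du\big)^{1/p}$, so that $Ng(r)^p\lesssim_p r^{-p}\sup_{d\le r}\nu(d)$ where $\nu(d):=d^{\,p-n}\mu([d,2d])$ and $d\mu=g^p u^{n-1}du$. Splitting $(0,r]$ dyadically gives $\sup_{d\le r}\nu(d)\le\sum_{j\ge0}(2^{-j}r)^{p-n}\mu([2^{-j-1}r,2^{-j+1}r])$, whence by Fubini
\[
\int_0^\infty Ng(r)^p\,r^{n-1}dr\ \lesssim_p\ \Big(\sum_{j\ge0}2^{-j(p-n)}\Big)\int_0^\infty\rho^{-1}\mu([\rho,4\rho])\,d\rho\ \lesssim_p\ \mu(\R_+)=\int_0^\infty g^p r^{n-1}dr ,
\]
the geometric series converging \emph{exactly} for $p>n$. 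Collecting the pieces proves the Proposition. I expect the main obstacle to be the second step: organising the case analysis that separates these singular one--sided averages from the Hardy--Littlewood part, and the realisation that the supremum defining $\nu$ must be decomposed dyadically rather than estimated at a single scale --- it is the convergence of $\sum_j 2^{-j(p-n)}$, the very threshold at which $|s|^{n-1}\in A_p(\R)$, that pins down the condition $p>n$, and this dyadic summation plays the role of the bush/brush covering of the intermediate parallelepipeds used in the original argument of \cite{10} and in \cite{11}, \cite{12}, \cite{23}.
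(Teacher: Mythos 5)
Your route is genuinely different from the paper's proof, which is a distributional estimate on $\{\mathcal U f>\lambda\}$ obtained by covering shells with tubes and bounding the overlap of the resulting bush/brush configurations, as in \eqref{mover}. You instead collapse everything to one-dimensional weighted maximal inequalities on the half-line via $u=\sqrt{d^2+s^2}$. The reduction is sound, the non-singular regime is correctly tied to $|s|^{n-1}\in A_p(\R)$ (the mechanism of example {\bf 1)}), and your estimate of $N$ --- H\"older against $(d+t)^{n-1}$, the dyadic decomposition of $\sup_{d\le r}\nu(d)$, and the Fubini computation giving $\log 4\cdot\mu(\R_+)$ --- is complete, correct, and is an attractive analytic substitute for the brush estimate; it locates the threshold $p>n$ exactly where the paper does.

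The gap is the tangential term, which you name but do not estimate, and which is \emph{not} analogous to $N$. Your inequality $L\gtrsim\max(d,s_d)\gtrsim r$ holds only when the averaging interval covers all of $s\in[0,\sqrt3\,d]$, so that the $u$-integral genuinely runs over $[d,2d]$; when $d$ is close to $r$ and the interval only reaches $u=U<2d$ one merely has $L\gtrsim\sqrt{U^2-d^2}$, and the resulting operator
\[
\widetilde Ng(r)=\sup_{d\le r\le U\le 2d}\frac{\sqrt d}{\sqrt{U^2-d^2}}\int_0^{U-d}g(d+t)\,t^{-1/2}\,dt
\]
cannot be treated by your dyadic summation in $d/r$ (there is no gain, since $d\asymp r$), nor is it pointwise dominated by the Hardy--Littlewood part: for $g=\chi_{[A,A+\eta]}$ one finds $\widetilde Ng(r)\asymp\sqrt{\eta/(r-A)}$ while $Mg(r)\asymp\eta/(r-A)$. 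What actually controls it is a different mechanism: H\"older with an exponent $s\in(2,p)$ (available since $p>n\ge2$) bounds it by $\big(|J|^{-1}\int_J g^s\big)^{1/s}$ over intervals $J=[d,U]\ni r$ contained in $[r/2,2r]$, and this \emph{local} maximal operator is bounded on $L^p(r^{n-1}\,dr)$ for every $p>s$ because the weight is essentially constant at that scale --- this is the analogue of the paper's thin-shell case and of example {\bf 1*)}, and the constraint it imposes is $p>2$, not $p>n$. Until this regime is written out the proof is incomplete. A second, minor, point: your ``bounded distortion'' claim in the first regime fails for long intervals $[s_1,s_2]$ with $s_2\gg s_1$ (the distortion factor is of order $s_2/s_1$); such intervals should be absorbed into the singular regime by replacing the average over $I$ with the one-sided average over $[0,s_2]$. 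Both repairs are feasible, but they are precisely the case analysis you deferred, and the mechanism needed is not the one you verified for $N$.
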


\begin{proof}
Fixing a radial function $f$ and a positive number $\lambda>0$ let us consider the set
\[
 E_\lambda=\big\{ x\;:\; \mathcal U f(x)>\lambda\big\}.
\]
Given $x\in E_{\lambda}$ there is a set of directions $\Gamma_x\subset S^{n-1}$ so that 
\[
\sup_{\substack{a,b>0\\ \omega\in S^{n-1}}}
\frac{1}{a+b}\int_{-a}^b \big|f(x+t\omega)\big|\; dt
\ge \lambda
\]
for every $\omega\in\Gamma_x$. Clearly if $\|x\|=\|y\|$ then $\Gamma_x$ and $\Gamma_y$ coincide after a rotation. Furthermore, for a given $x$ and $\omega\in \Gamma_x$ we will choose $a,b$ satisfying the inequality above and such that its sum $a+b$ is the biggest possible; then among all such $\omega\in S^{n-1}$, $a$ and $b$, we select those maximizing the projection of the segment 
$\big\{x+s\omega\;:\; -a\le s\le b\big\}$
into the radial direction.

Taking advantage of the radial symmetry and after some elementary geometrical considerations, the set $E_\lambda$ is contained in an \lq\lq almost disjoint'' union of annuli:
\[
 E_\lambda=
\bigcup_j
\big\{R_j^1\le |x|\le R_j^2\big\}=\bigcup C_j
\]
satisfying the following properties:
\begin{enumerate}
 \item 
For each annulus $C_j$ there exists a direction $\omega_j$ so that the average of $|f|$ on the segments of directions $\omega_j$, starting at some point of the sphere $\|x\|=R_j^1$ and ending on the sphere  $\|y\|=R_j^2$, is the given value $\lambda$.
 \item 
We have that $C_j\cap C_k=\emptyset$ if $|k-j|\ge 2$.
\end{enumerate}
Clearly, in order to estimate the size of $E_\lambda=\{\mathcal U f>\lambda\}$ it is enough to control, independently, the portion of that set contained in each annulus $C_j$.

We shall distinguish two classes of annuli:
\begin{enumerate}
 \item[i)] 
Thin shells: $\big\{ R\le |x|\le R+\Delta;\ \Delta<\frac 12 R\big\}$.
 \item[ii)] 
Thick shells: $\big\{ R\le |x|\le R+\Delta;\ \Delta\ge\frac 12 R\big\}$.
\end{enumerate}
In both cases (thin or thick) we fix two poles $N$ and $S$ (North and South) in order to estimate the portion of $E_\lambda$ near the equator. Given the radial symmetry we can select the direction $\omega$ in such a way that the ray from each point of the shell meets the $NS$  axis, but we can also establish the convention that, among the two possible rays, the chosen one is pointing north.

For a thick annulus $C$ its volume is comparable to $(\Delta+R)^n$. Therefore one needs to show the inequality
\[
 (\Delta+R)^n\lesssim_p
\frac{1}{\lambda^p}
\int_C |f(x)|^p\; dx
\qquad
\text{for any }p>n.
\]
Let us observe that for each point in the sphere $\|x\|=R+\Delta$ we are then given a straight line segment $L_x$ starting at $x$; ending at the inner sphere $\|y\|=R$; tangent to a certain sphere $\|y\|=R_0\le R$ and such that
\begin{enumerate}
 \item 
The average of $|f|$ on $L_x$ is $\lambda$.
\item
The straight line $L_x$ intersects the $NS$ axis.
\end{enumerate}
Again, given the fixed radial function $f$ one can enlarge the segment $L_x$ (with some small positive number $\epsilon$) to become a tube of radius $\epsilon$. In such a way that the average of $|f|$ in that tube is bigger than, says, $\lambda/2$.

Next we cover $E_\lambda$ (or a fixed portion of it close to the equator) with a family of those tubes but keeping them pairwise disjoint at the outer sphere $\|x\|=R+\Delta$.

That is, we have obtained what is called a \emph{brush configuration} of tubes, meeting the $NS$ axis and being disjoint at the outer sphere of the shell.

Let us denote by $\big\{T_\nu^\epsilon\big\}_\nu$ the collection of those tubes and let us consider their overlapping function 
$\sum_\nu \chi_{ T_\nu^\epsilon }(x)$.
Then some elementary geometric considerations (\lq\lq brush argument'') yields the following estimate:
\begin{equation}\label{mover}
 \big|
\big\{
\sum_\nu \chi_{ T_\nu^\epsilon }(x)\ge d>0
\big\}
\big|
\lesssim
\frac{(\Delta+R)^n}{d^{n/(n-1)}}.
\end{equation}
Therefore
\begin{eqnarray*}
 |C|
&\lesssim&
\sum_\nu\big| T_\nu^\epsilon \big|
\lesssim
\frac{1}{\lambda}
\sum_\nu
\int_{ T_\nu^\epsilon }|f(x)|\; dx
\le
\frac{1}{\lambda}
\sum_\nu
\int|f(x)|\sum_\nu \chi_{ T_\nu^\epsilon }(x)\; dx
\\
&\le &
\frac{1}{\lambda}
\|f\|_p\big\|\sum_\nu \chi_{ T_\nu^\epsilon }
\big\|_q
\lesssim
\frac{1}{\lambda}
\|f\|_p
|C|^{1/q}
\qquad\text{with }1/p+1/q=1,
\end{eqnarray*}
so long as $q< n/(n-1)$ (equivalently $p>n$) allowing us to finish the proof. 

\

The elementary geometrical considerations are the following:

\

1) For each $\theta$ in the equator  $S^{n-2}$ of $S^{n-1}$, let us consider the two dimensional plane $H_\theta$ determined by $\theta$ and the $NS$ axis, and also the enlarged band $H_\theta^\epsilon=\big\{x\;:\; \text{dist}(x,H_\theta)\le \epsilon/2\big\}$. Choosing conveniently $\epsilon$-spaced points $\{\theta_j\}$ in $S^{n-2}$, we may assume that the sets $H_{\theta_j}^\epsilon\cap \{|x|=R+\Delta\}$ are pairwise disjoint and their union cover $\{|x|=R+\Delta\}$. Then their overlapping where $|x|<R+\Delta$ is easily controlled: at distance $R_0<r<R+\Delta$ we have
\[
 \sum \chi_{ H_{\theta_j}^\epsilon }(x)\lesssim
\Big(\frac{\Delta+R}{r}\Big)^{n-2}.
\]
Then the collection of tubes $\big\{T_\nu^\epsilon\big\}$ is divided into disjoint classes by the inclusion relation $T_\nu^\epsilon\subset H_{\theta_j}^\epsilon $.

\

2) The tubes inside $H_{\theta_j}^\epsilon $ meets at the tangential inner sphere $\|x\|=R_0\le R$ producing a collection of  bush configurations (that is sets of tubes meeting at a common point in $R_0$ ) such that each tube inside $H^\epsilon_{\theta_j}$ belong to, at most, two of those bushes.  Then their overlapping function is easily controlled by an elementary calculation (bush argument):
\[
 \sum_{T_\nu^\epsilon \subset H_{\theta_j}^\epsilon } 
\chi_{ T_\nu^\epsilon }(x)
\lesssim
\frac{\Delta+R}{r}
\]
when $R_0<|x|=r<R+\Delta$.

\

Those two estimates together imply that in the sphere $|x|=r$, $R_0<r<R+\Delta$ the overlapping is bounded by $(R+\Delta)^{n-1}/r^{n-1}$ uniformly in $\epsilon>0$. That is, we get \eqref{mover}.

\

To treat thin shells we proceed in the same manner, but here the estimate is reduced easily to each plane $H_\theta$. That is, we only have to consider the two dimensional case and observe that the $\epsilon$-rectangle $R_{\theta, j}^\epsilon$ arranged in disjoint families of bushes satisfying the overlapping estimate
\[
  \big|
\big\{
\sum \chi_{ R_{\theta, j}^\epsilon }(x)\ge d
\big\}
\big|
\lesssim
\frac{\Delta R}{d^2}
\]
which implies
\[
 \int_{S^{n-2}}
 \big|
\big\{
\sum \chi_{ R_{\theta, j}^\epsilon }(x)\ge d
\big\}
\big|
R^{n-2}\;d\theta
\lesssim
\frac{\Delta R^{n-1}}{d^2}.
\]
\end{proof}

\section{The disc multiplier revisited}

With the same notation used in the introduction, let us consider
\[
 \widehat{Tf}(\xi)=\chi_B(\xi)\widehat{f}(\xi)
\]
for rapidly decreasing smooth functions $f$, where $B$ is the unit ball in $\R^n$, $n\ge 2$.

\begin{theorem}\label{disc}
The operator $T$ is bounded on  
$L^p_{\text{\rm rad}}L^2_{\text{\rm ang}}(\R^n)$
if and only if  
${2n}/{(n+1)}<p<{2n}/{(n-1)}$,
\end{theorem}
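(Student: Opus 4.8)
The plan is to deduce Theorem~\ref{disc} from the directional results already established, following the strategy that the disk multiplier is controlled by a square function built from Hilbert transforms in many directions, together with a Fourier-restriction (or $TT^*$) input. As always it suffices to treat the range $2\le p<2n/(n-1)$ and obtain the complementary range $2n/(n+1)<p\le 2$ by duality, since the $L^p_{\text{rad}}L^2_{\text{ang}}$ spaces are dual to one another (with $2/p+1/q=1$, but here using the self-duality of the mixed-norm pairing). The ``only if'' half is immediate: testing $T$ against the same example used in Theorem~\ref{hbounded}, or equivalently noting that $T$ bounded forces $H_\omega$ bounded via Meyer's lemma (Corollary~\ref{meyer}) in the reverse direction, shows $p$ must lie in the stated interval. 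So the content is the ``if'' direction.

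First I would reduce to a single frequency-localized piece: write $T=\sum_k T_k$ where $T_k$ cuts off to the annulus $1-2^{-k}\le|\xi|\le 1-2^{-k-1}$ near the sphere, plus a harmless smooth interior piece. By the dilation invariance of the $L^p_{\text{rad}}L^2_{\text{ang}}$ norm (established in the proof of Theorem~\ref{hbounded}) and a Littlewood--Paley argument in the radial variable, it is enough to bound one such annular piece with a gain that is summable in $k$. Next, for a fixed annulus of thickness $\delta=2^{-k}$, decompose the $\delta$-neighborhood of $S^{n-1}$ into $\sim\delta^{-(n-1)/2}$ caps $\kappa$ of angular width $\delta^{1/2}$, so each cap sits in a rectangular slab; the multiplier of the slab dual to cap $\kappa$ is, up to acceptable errors, a product of a (smooth, hence harmless by Theorem~\ref{singular}) tangential cutoff and a one-dimensional half-space multiplier in the direction $\omega_\kappa$ normal to the cap — i.e. a $T_R$-type operator which is uniformly bounded on $L^p_{\text{rad}}L^2_{\text{ang}}$ by the argument in Theorem~\ref{hbounded}. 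The key geometric point (Córdoba's square-function estimate) is that the slabs dual to the caps have bounded overlap after rescaling, so $\|Tf\|\lesssim\|(\sum_\kappa|T_\kappa f|^2)^{1/2}\|$; here Meyer's lemma (Corollary~\ref{meyer}), applied to the directions $\{\omega_\kappa\}$, converts the square function of the $T_\kappa f$ into the square function of the pieces $f_\kappa=(\text{smooth cap cutoff})f$, and the bounded-overlap property of the caps lets one pass from $\|(\sum|f_\kappa|^2)^{1/2}\|$ back to $\|f\|$ at the cost of a power of $\delta^{-1}$. Balancing this loss against the gain coming from the $L^2$ orthogonality / Plancherel on the thin annulus produces, for $p$ in the open interval $2n/(n+1)<p<2n/(n-1)$, a net power $2^{-\epsilon(p)k}$, which sums.

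The main obstacle is the square-function step and the bookkeeping of the cap decomposition in the mixed-norm space: one must check that Córdoba's $L^p$ square-function inequality for the $\delta^{1/2}$-caps, and the reconstruction of $f$ from its cap pieces, both survive when $L^p(\R^n)$ is replaced by $L^p_{\text{rad}}L^2_{\text{ang}}(\R^n)$. This is exactly where the passage to radial-times-angular norms is supposed to help — the angular $L^2$ averaging is harmless under the almost-orthogonal angular cap decomposition, and the radial part only sees one-dimensional directional operators, which are controlled by Theorem~\ref{hbounded} and Corollary~\ref{meyer} precisely in the range $2n/(n+1)<p<2n/(n-1)$. Once the square-function reduction is in place the rest is the routine $\delta$-power counting sketched above, and the endpoint exclusion at $p=2n/(n\pm1)$ matches the necessity already proved, so the range is sharp.
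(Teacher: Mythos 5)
Your outline follows the classical Bochner--Riesz strategy (annular decomposition near the sphere, $\delta^{1/2}$-caps, C\'ordoba square function, Meyer's lemma), but this is not how the paper proceeds, and the proposal has a genuine gap precisely at the step that carries all the difficulty: the source of the summable gain $2^{-\epsilon(p)k}$ over the annuli $1-2^{-k}\le|\xi|\le 1-2^{-k-1}$. In the Bochner--Riesz setting that gain is supplied by the multiplier itself, which is $O(\delta^{\alpha})$ on the $\delta$-annulus; for the sharp cutoff $\chi_B$ there is no such factor, and the cap/square-function machinery by itself \emph{loses} powers of $\delta^{-1}$ (in the passage $\|Tf\|\lesssim\|(\sum_\kappa|T_\kappa f|^2)^{1/2}\|$ and in reassembling $f$ from the $f_\kappa$). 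You assert that ``$L^2$ orthogonality / Plancherel on the thin annulus'' balances this loss, but you give no mechanism, and none is known: this is exactly the point where the $L^p$ theory breaks down (Fefferman's theorem), so any proof must identify concretely how the mixed norm rescues the sum over $k$. In addition, your claim that the slab multipliers are ``$T_R$-type operators uniformly bounded by the argument in Theorem~\ref{hbounded}'' is circular if taken literally, since in that proof the uniform bounds for $T_R$ are \emph{deduced from} the boundedness of $T$, i.e.\ from the very theorem you are proving (only the half-space multipliers $H_\omega$ have an independent proof via the universal Kakeya maximal function).

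The paper's route is entirely different and is where the mixed norm actually enters: expand $f$ in spherical harmonics, use that $\chi_B$ acts diagonally on this expansion, and reduce the theorem to a vector-valued, one-dimensional weighted inequality for the kernels $K_\nu(t,r)=\sqrt{tr}\int_0^1J_\nu(rs)J_\nu(ts)s\,ds$. A Wronskian identity splits $K_\nu$ into Hilbert-transform-like and Hardy-operator-like pieces weighted by $\sqrt{t}J_\nu'(t)J_\nu(r)\sqrt{r}$, and the ``gain'' that replaces your missing $2^{-\epsilon k}$ comes from the uniform Bessel asymptotics near the turning point $r,t\approx\nu$ (Lemma~\ref{vdc}), combined with $A_{p}$ and $A_{p/s}$ weight theory for the radial power $r^{(n-1)(1-p/2)}$, which is an $A_p$ weight exactly when $2n/(n+1)<p<2n/(n-1)$. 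The angular $L^2$ is exploited through orthogonality of spherical harmonics, not through a cap decomposition. To repair your argument you would have to either supply a genuinely new quantitative mechanism for summing the annular pieces in $L^p_{\text{\rm rad}}L^2_{\text{\rm ang}}$, or abandon the cap decomposition in favour of the diagonalization above.
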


The proof of this theorem was first given in \cite{6} (see also \cite{7} for an independent proof and \cite{17} for a weighted version). Here we will improve and simplify our previous presentation in order to motivate subsequent results.

\

The \lq\lq only if'' part follows easily taking $f$ to be the inverse Fourier transform of a $C^\infty$-function, $\widehat{f}$, such that 
$\widehat{f}\equiv 1$ when  $|\xi|\le 1$
and
$\widehat{f}\equiv 0$ if  $|\xi|\ge 2$.
Then 
\[
 Tf(x)=C_n|x|^{-n}J_{n/2}(2\pi|x|)
\]
which belongs to 
$L^p_{\text{\rm rad}}L^2_{\text{\rm ang}}(\R^n)$
if and only if $p>2n/(n+1)$. The other bound $p<2n/(n-1)$ follows by duality.

Given $f\in\mathcal S(\R^n)$ (the Schwartz class of rapidly decreasing smooth functions) it has a development
\[
 f(x)=\sum_{k,\ell} f^{\ell}_k(|x|)Y^{\ell}_k\big(\frac{x}{|x|}\big),
\]
where the $f^\ell_k$ are defined on $[0,\infty)$ and $\{Y^\ell_k\}$ is an orthonormal basis of the $d_k$-dimensional space of spherical harmonic polynomials of degree $k$ in $\R^n$. Here, the index $k$ takes nonnegative integer values and, for each $k$, $\ell$ takes values in the interval $1\le \ell\le d_k={n+k-1 \choose k}-{n+k-3 \choose k-2}$.

It is a well-known
 fact (see \cite{22}) that the Fourier transform preserves that development:
\[
\widehat{f}(\xi)=\sum_{k,\ell} i^kF^\ell_k(|\xi|)Y^\ell_k\big(\frac{\xi}{|\xi|}\big),
\]
where
\[
 F^\ell_k(r)=
r^{-n/2+1}
\int_0^\infty f^\ell_k(s) s^{n/2}J_{k-1+n/2}(rs)\;ds.
\]
Here $J_\nu$ denotes the Bessel function of order $\nu\geq 0$, which evaluated at a non-negative real number $x$ is given  by the integral
\[
 J_\nu( x)= \frac{1}{\pi}
\int_0^{\pi}\cos(\nu t-x\sin t)\; dt - \frac{sin\pi\nu}{\pi}\int_0^{\infty} e^{-\nu t- xsinh t}\; dt.
\]
(\cite{16}, page 176, formula 4).

Continuing with the proof of the theorem, we note that 
\[
 Tf(x)=\sum_{k=0}^\infty
T_k^n f^\ell_k(|x|)Y^\ell_k\big(\frac{x}{|x|}\big)
\]
where $T_k^n$ are integral operators given by the formula
\[
 T_k^ng(t)= (-1)^kt^{-(n-1)/2}
\int_0^\infty f(r) r^{(n-1)/2}K_{k-1+n/2}(t,r)\;dr.
\]
and 
\[
 K_{\nu}(t,r)
=
\sqrt{tr}
\int_0^1 J_\nu(rs)J_\nu(ts) s\; ds.
\]

\

Let us begin decoding the kernels (see also ref. \cite{16}). 
Consider the ODE verified by the Bessel functions
\[
 x^2J_\nu''(x)+xJ_\nu'(x)+(x^2-\nu^2)J_\nu(x)=0
\]
which written in terms of the functions
\[
 \mathcal U_r(s)=\sqrt{rs}J_\nu(rs)
\qquad\text{and}\qquad
q_r(s)=\frac 14 s^{-2} +r^2-\nu^2s^2
\]
becomes
\[
 \mathcal U''_r(s)+q_r(s)\mathcal U_r(s)=0.
\]
Then
\begin{eqnarray*}
(t^2-r^2)\sqrt{rt}\int_0^1J_\nu(rs)J_\nu(st)s\; ds
&=&
\int_0^1\big(q_r(s)-q_t(s)\big)
\mathcal U_r(s)
\mathcal U_t(s)\; ds
\\
&=&
\int_0^1
\big(
\mathcal U_r(s)
\mathcal U_t''(s)-
\mathcal U_r''(s)
\mathcal U_t(s)
\big)
\; ds
\\
&=&
\mathcal U_r(1)
\mathcal U_t'(1)-
\mathcal U_r'(1)
\mathcal U_t(1).
\end{eqnarray*}
Therefore
\begin{eqnarray*}
 K_\nu(t,r)
&=&
\sqrt{rt}
\frac{tJ_\nu'(t)J_\nu(r)-rJ_\nu'(r)J_\nu(t)}{t^2-r^2}
\\
&=&
\frac{\sqrt{t}J_\nu'(t)J_\nu(r)\sqrt{r}}{2(t-r)}
+
\frac{\sqrt{t}J_\nu'(t)J_\nu(r)\sqrt{r}}{2(t+r)}
\\
&&
+
\frac{\sqrt{t}J_\nu(t)J_\nu'(r)\sqrt{r}}{2(r-t)}
+
\frac{\sqrt{t}J_\nu(t)J_\nu'(r)\sqrt{r}}{2(r+t)}
=\sum_{j=1}^4 K_\nu^j(t,r).
\end{eqnarray*}
Thus we have obtained four families of the integral operators
\[
 K_\nu^j f(t)=\int_0^\infty K_\nu^j(t,r)f(r)\; dr.
\]
However, taking into account the asymmptotics of Bessel functions (see \cite[p.\;199]{16}):
\begin{eqnarray*}
 J_\nu(z)
\sim
\big(\frac{2}{\pi z}\big)^{1/2}
\Big(
\cos\big(z-\frac{2\nu+1}{4}\pi\big)
\sum_{m=0}^\infty
\frac{(-1)^m(\nu,2m)}{(2z)^{2m}}
\qquad \ \qquad
&&
\\
-
\sin\big(z-\frac{2\nu+1}{4}\pi\big)
\sum_{m=0}^\infty
\frac{(-1)^m(\nu,2m+1)}{(2z)^{2m+1}}
\Big),
&&
\end{eqnarray*}
where $(\nu,k) = \frac{\Gamma(\nu+k+\frac12)}{k!\Gamma(\nu-k+\frac12)}$,  
one may infer heuristically that the kernels $j=1$ and $j=3$ correspond to Hilbert transforms, while the cases  $j=2$ and $j=4$, are less singular and produce Hardy integral operators. Since the order $\nu$ of the Bessel functions is increasing, the asymptotic estimate above does not produce uniform bounds, and the more precise analysis of the following lemma is needed in the critical range $\nu/2\le r\le 2\nu$.

\begin{lemma}\label{vdc}
The following estimates hold uniformly on $\nu\ge 1$.
\begin{eqnarray*}
\text{i)}&&
\big|J_\nu(r)\big|\lesssim \frac{1}{r^{1/2}};
\quad
\big|J_\nu'(r)\big|\lesssim \frac{1}{r^{1/2}},
\quad
\text{when }r\ge \frac{3}{2}\nu.
\\
\text{ii)}&&
\big|J_\nu(r)\big|\lesssim \frac{1}{1+\nu};
\quad
\big|J_\nu'(r)\big|\lesssim \frac{1}{(1+\nu)^2},
\quad
\text{when }r\le \frac{3}{2}\nu.
\\
\text{iii)}&&
\big|J_\nu(\nu+\rho\nu^{1/3})\big|\lesssim \frac{1}{\rho^{1/4}\nu^{1/3}}
\quad\text{ and }
\\
&&
\big|J_\nu'(\nu+\rho\nu^{1/3})\big|\lesssim \frac{\rho^{1/4}}{\nu^{2/3}},
\quad
\text{when }1\le\rho< \frac{1}{2}\nu^{2/3}.
\\
\text{iv)}&&
\big|J_\nu(\nu-\rho\nu^{1/3})\big|\lesssim \frac{1}{\rho \nu^{1/3}}
\quad\text{ and }
\\
&&
\big|J_\nu'(\nu-\rho\nu^{1/3})\big|\lesssim \frac{1}{\rho^2\nu^{2/3}},
\quad
\text{when }1\le\rho< \nu^{2/3}.
\end{eqnarray*}
\end{lemma}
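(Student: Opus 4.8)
The plan is to argue directly from the integral representation of $J_\nu$ displayed above (differentiating under the integral sign for $J_\nu'$) and to reduce everything to van der Corput estimates for the oscillatory integral with phase $\phi(t)=\nu t-r\sin t$. Thus the first term of $J_\nu(r)$ is $\tfrac1\pi\,\mathrm{Re}\int_0^\pi e^{i\phi(t)}\,dt$ and the first term of $J_\nu'(r)$ is $\tfrac1\pi\,\mathrm{Im}\int_0^\pi(\sin t)\,e^{i\phi(t)}\,dt$; the second (``exponential'') term and its $r$-derivative are harmless, being $\lesssim 1/\nu$ and $\lesssim 1/\nu^2$ respectively (and vanishing identically for integer $\nu$), and in each of (i)--(iv) the stated constraint on $\rho$ is exactly what makes these dominated by the asserted right-hand side. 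So it suffices to bound $I:=\int_0^\pi e^{i\phi}\,dt$ and its amplitude-weighted cousin $I':=\int_0^\pi(\sin t)e^{i\phi}\,dt$.

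I would then record the elementary features of $\phi$: $\phi'(t)=\nu-r\cos t$ is \emph{strictly increasing} on $(0,\pi)$ since $\phi''(t)=r\sin t\ge 0$; $\phi'(0)=\nu-r$ and $\phi'(\pi)=\nu+r$; and $\phi$ has a unique stationary point $t_c\in(0,\pi)$ precisely when $r>\nu$, where $\cos t_c=\nu/r$ and $\phi''(t_c)=\sqrt{r^2-\nu^2}$. Writing $\phi'(t)=(\nu-r)+r(1-\cos t)$ with $1-\cos t\asymp t^2$ on $[0,\pi]$, the transition between the linear and cubic behaviour of $\phi$ near the origin occurs at $t\asymp\tau_0:=(|\nu-r|/r)^{1/2}$, and $t_c\asymp\tau_0$ when $r>\nu$. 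The tools are the two van der Corput lemmas: on a subinterval where $\phi'$ is monotone and $|\phi'|\ge\lambda$, the contribution to $I$ is $\lesssim\lambda^{-1}$; on one where $|\phi''|\ge\mu$, it is $\lesssim\mu^{-1/2}$; inserting an amplitude $a$ costs a factor $\|a\|_\infty+\|a'\|_{L^1}$; and for extra decay one integrates by parts against $e^{i\phi}/i\phi'$, organised by a smooth partition of unity so that the boundary terms vanish or appear exactly where wanted.

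Cases (i) and (ii) are routine. For (i), $r\ge\tfrac32\nu$: split $[0,\pi]$ into $[0,\tfrac\pi8]\cup[\tfrac\pi8,\tfrac{7\pi}8]\cup[\tfrac{7\pi}8,\pi]$; on the middle piece $\phi''=r\sin t\gtrsim r$ gives $\lesssim r^{-1/2}$, while on the outer two the bound $\nu/r\le\tfrac23$ forces $|\phi'|=|\nu-r\cos t|\gtrsim r$ and gives $\lesssim r^{-1}\le r^{-1/2}$; since $\sin t$ has bounded variation, $I'$ satisfies the same estimates. For (ii), in the regime where $\phi$ has no stationary point (so $\phi'$ is monotone with $|\phi'|\ge\nu-r\gtrsim\nu$): the first-derivative lemma gives $|I|\lesssim\nu^{-1}$, and for $I'$ one integration by parts --- legitimate since $\phi'\neq0$, and with no boundary terms because $\sin0=\sin\pi=0$ --- reduces matters to the integral of an amplitude of size $O(\nu^{-1})$, whence $|I'|\lesssim\nu^{-2}$.

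The substance of the lemma is the turning-point zone (iii) and (iv), where $t_c$ (when it exists) has merged with the endpoint $0$ and $\phi(t)=(\nu-r)t+\tfrac r6 t^3+\cdots$ is nearly cubically degenerate near $0$. Here I would decompose $[0,\pi]$ by a smooth partition adapted to $\tau_0\asymp\sqrt\rho\,\nu^{-1/3}$: a core of size $\asymp\tau_0$, dyadic annuli $[2^j\tau_0,2^{j+1}\tau_0]$ out to scale $1$, and a terminal piece where $|\phi'|\gtrsim\nu$. In (iii) ($r=\nu+\rho\nu^{1/3}$, $1\le\rho<\tfrac12\nu^{2/3}$) the core splits once more: on $[0,\tfrac{\tau_0}2]$ one has $|\phi'|\asymp\rho\nu^{1/3}$ so the first-derivative lemma gives $\lesssim(\rho\nu^{1/3})^{-1}$, while on the window $[\tfrac{\tau_0}2,2\tau_0]$ containing $t_c$ one has $|\phi''|\gtrsim r\tau_0\asymp\sqrt\rho\,\nu^{2/3}$ so the second-derivative lemma gives $\lesssim(\sqrt\rho\,\nu^{2/3})^{-1/2}=\rho^{-1/4}\nu^{-1/3}$; on the annuli $|\phi'|\gtrsim 4^j\rho\nu^{1/3}$ and on the terminal piece $|\phi'|\gtrsim\nu$, so these contribute only a convergent geometric series of smaller terms, and altogether $|I|\lesssim\rho^{-1/4}\nu^{-1/3}$; passing the amplitude $\sin t\asymp\tau_0$ through the window then turns this into $|I'|\lesssim\tau_0\cdot\rho^{-1/4}\nu^{-1/3}=\rho^{1/4}\nu^{-2/3}$ --- the two asserted bounds. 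In (iv) ($r=\nu-\rho\nu^{1/3}<\nu$, $1\le\rho<\nu^{2/3}$) there is no stationary point and $\phi'$ increases from its minimum $\rho\nu^{1/3}$ at $0$, so the first-derivative lemma gives $|I|\lesssim(\rho\nu^{1/3})^{-1}$ outright; but the asserted $|I'|\lesssim\rho^{-2}\nu^{-2/3}=(\rho\nu^{1/3})^{-2}$ sits \emph{two} powers of the frequency below the trivial first-order estimate, so on each piece of the partition one must integrate by parts \emph{twice}, the smoothness of the partition annihilating every boundary term except a single genuine one of size $(\rho\nu^{1/3})^{-2}$ at $t=0$; the gain $(\phi')^{-2}$ absorbs the loss from differentiating the amplitude because $\phi'''=r\cos t$ only ever appears multiplied by factors ($\sin t$ on the core, powers of $2^{-j}$ on the annuli) small enough to compensate. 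This last point --- extracting genuine decay near the turning point, and matching the double integration by parts term by term against the dyadic decomposition so that nothing exceeds the target $\rho^{a}\nu^{b}$ --- is the one real obstacle; everything else is a bookkeeping-free application of van der Corput's two lemmas.
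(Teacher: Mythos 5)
Your strategy coincides with the paper's: the proof given in the text is only a stationary--phase sketch of the first estimate in (iii) --- locating the critical point of $\phi(t)=\nu t-r\sin t$ and computing $\phi''(t_c)=\sqrt{r^2-\nu^2}\approx\rho^{1/2}\nu^{2/3}$ --- with all remaining cases deferred to Watson, so what you have done is carry out that van der Corput analysis completely, including the derivative bounds. Your outline is sound and each step can be executed; in particular the turning-point window in (iii) (second-derivative test on $[\tau_0/2,2\tau_0]$ with $\phi''\gtrsim r\tau_0\asymp\rho^{1/2}\nu^{2/3}$, first-derivative test with $|\phi'|\gtrsim 4^j\rho\nu^{1/3}$ on the annuli) checks out, as does the delicate bound $|J_\nu'(\nu-\rho\nu^{1/3})|\lesssim(\rho\nu^{1/3})^{-2}$ in (iv). For the latter, though, the dyadic machinery is unnecessary: two integrations by parts on all of $[0,\pi]$ suffice, since the first produces no boundary terms ($\sin 0=\sin\pi=0$), the second produces only boundary terms of size $\phi'(0)^{-2}=(\rho\nu^{1/3})^{-2}$ and $\phi'(\pi)^{-2}\le(\rho\nu^{1/3})^{-2}$, and every surviving integrand is controlled by three elementary facts: $\phi'\ge\nu-r=\rho\nu^{1/3}$; $\phi''=r\sin t$, so $\int_0^\pi r\sin t\,(\phi')^{-k}\,dt$ is computed exactly and is $\lesssim(\rho\nu^{1/3})^{1-k}$; and $r\sin^2 t\le\frac{\pi^2}{2}\phi'(t)$ (because $\phi'\ge r(1-\cos t)\ge\frac{2}{\pi^2}rt^2$), which is what absorbs the worst term $r^2\sin^3t\,(\phi')^{-4}$.

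The one point you must address explicitly is (ii). As printed, (ii) is false for $r$ near $\nu$: one has $J_\nu(\nu)\asymp\nu^{-1/3}\gg\nu^{-1}$, and (ii) on the full range $r\le\frac{3}{2}\nu$ would contradict (iii) and (iv). Your argument for (ii) correctly assumes $|\phi'|\ge\nu-r\gtrsim\nu$, i.e.\ $r\le c\nu$ with $c<1$; that is the intended (and the only used) version of the statement, since in the proof of Proposition 3.6 part (ii) is applied on $I_0^\nu=[0,\nu/2)$. You should say that you are proving (ii) on this corrected range rather than on $r\le\frac{3}{2}\nu$; otherwise your proof appears to have a gap precisely where the statement, not the argument, is at fault.
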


\noindent As an indication for the reader we sketch the proof of iii). The stationary phase method, van der Corput estimates, and integration by parts allow us to control the size of $J_\nu( x)$ in the critical interval $\frac{\nu}{2}\leq x\leq 2\nu$:

We have
\[
 J_\nu(x)
=
\Re\frac{1}{\pi}\int_0^{\pi}
e^{i(\nu t-x\sin  t)}\; dt + O(\frac{1}{\nu})
=
\Re\frac{1}{\pi}\int_0^{\pi}
e^{ix(\theta t-\sin  t)}\; dt + O(\frac{1}{\nu})
\]
where $\theta=\nu/x\approx \nu/(\nu+\rho\nu^{1/3})\approx 1-\rho \nu^{-2/3}$.
The critical point corresponds to 
\[
 \frac{d}{dt}(\theta t-\sin t)=0,
\]
i.e. $t_\theta=\frac{1}{\pi}\cos^{-1}\theta$. Then 
\[
 \frac{d^2}{dt^2}(\theta t-\sin t)\Big|_{t=t_\theta}=
\sqrt{1-\theta^2}
\approx \sqrt{2\rho\nu^{-2/3}}
\approx \rho^{1/2}\nu^{-1/3}.
\]
Therefore stationary phase method yields

\[
 \big|J_\nu(x)\big|
\lesssim 
\frac{1}{\sqrt{x}}\cdot\frac{1}{\sqrt{\rho^{1/2}\nu^{-1/3}}}
\lesssim
\frac{1}{\rho^{1/4}\nu^{1/3}}
\]
because $x\approx \nu+\rho\nu^{1/3}$, $\rho<\nu^{2/3}$.

The other cases follow by similar arguments (see ref. \cite{16}).

\begin{corollary}\label{prodJ}
In the critical range $\nu/2\le r\le 2\nu$, uniformly in $\nu$, the following estimate hold:
\begin{eqnarray*}
\frac{1}{\nu}
\int_{\nu/2}^{2\nu}
\big|J_\nu(r)r^{1/2}\big|^p\; dr\le C_p,\text{ for }p<4.
\end{eqnarray*}
\end{corollary}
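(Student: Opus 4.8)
The plan is to strip off the weight $r^{p/2}$: since $\nu/2\le r\le 2\nu$ it is comparable to $\nu^{p/2}$ (with constant depending only on $p$), so the corollary is equivalent to the bare estimate
\[
\int_{\nu/2}^{2\nu}\big|J_\nu(r)\big|^p\,dr\ \lesssim_p\ \nu^{1-p/2},\qquad p<4 .
\]
To prove this I would dissect $[\nu/2,2\nu]$ according to the geometry of the turning point $r=\nu$, into the oscillatory tail $O=[\tfrac32\nu,2\nu]$, the turning-point window $W=[\nu-\nu^{1/3},\nu+\nu^{1/3}]$, the left zone $L=[\nu/2,\nu-\nu^{1/3}]$, and the right zone $Z=[\nu+\nu^{1/3},\tfrac32\nu]$, and estimate the four pieces using the corresponding parts of Lemma~\ref{vdc}. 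Note only the bounds on $J_\nu$ itself (not on $J_\nu'$) are needed.

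Three of the pieces are routine. On $O$, part i) of Lemma~\ref{vdc} gives $|J_\nu(r)|\lesssim\nu^{-1/2}$, so $\int_O|J_\nu|^p\lesssim\nu\cdot\nu^{-p/2}$. On $W$ I would use the classical uniform bound $|J_\nu(r)|\lesssim\nu^{-1/3}$ (this is parts iii), iv) of Lemma~\ref{vdc} at $\rho=1$, and holds throughout $W$ by continuity) together with $|W|=2\nu^{1/3}$, giving $\int_W|J_\nu|^p\lesssim\nu^{1/3-p/3}$. On $L$, substituting $r=\nu-\rho\nu^{1/3}$ (so $dr=\nu^{1/3}d\rho$ and $1\le\rho\le\tfrac12\nu^{2/3}$) and invoking part iv), $|J_\nu(r)|^p\lesssim(\rho\nu^{1/3})^{-p}$, whence
\[
\int_L\big|J_\nu(r)\big|^p\,dr\ \lesssim\ \nu^{1/3-p/3}\int_1^{\frac12\nu^{2/3}}\rho^{-p}\,d\rho\ \lesssim_p\ \nu^{1/3-p/3},
\]
the $\rho$-integral being $O_p(1)$ when $p>1$ and harmless (a small power of $\nu$, possibly with a logarithm) when $p\le1$. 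In each of these three cases the condition $p\le4$ is exactly what makes $\nu^{1/3-p/3}$ and $\nu\cdot\nu^{-p/2}$ no larger than $\nu^{1-p/2}$.

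The crux is the right zone $Z$. Writing $r=\nu+\rho\nu^{1/3}$ with $1\le\rho\le\tfrac12\nu^{2/3}$, part iii) of Lemma~\ref{vdc} gives only the Airy-type decay $|J_\nu(r)|^p\lesssim(\rho^{1/4}\nu^{1/3})^{-p}$, so
\[
\int_Z\big|J_\nu(r)\big|^p\,dr\ \lesssim\ \nu^{1/3-p/3}\int_1^{\frac12\nu^{2/3}}\rho^{-p/4}\,d\rho .
\]
Here $p<4$ forces $p/4<1$, so the last integral is at most $(1-p/4)^{-1}(\tfrac12\nu^{2/3})^{1-p/4}$, and multiplying out one lands precisely on $\int_Z|J_\nu|^p\lesssim(4-p)^{-1}\nu^{1-p/2}$. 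Summing the four contributions then yields $\int_{\nu/2}^{2\nu}|J_\nu|^p\lesssim_p\nu^{1-p/2}$, hence the corollary, with a constant $C_p$ of order $(4-p)^{-1}$. I expect this last step to be the main obstacle, and it is also what pins down the exponent: the $\rho^{-1/4}$ tail just to the right of the turning point is only borderline integrable, so $p=4$ is genuinely critical — at $p=4$ the $Z$-integral acquires an extra factor $\log\nu$ and the estimate fails — which is exactly why the statement is restricted to $p<4$.
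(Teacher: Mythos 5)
Your proposal is correct and is exactly the argument the paper intends: the corollary is stated without proof as a direct consequence of Lemma~\ref{vdc}, and the intended (implicit) proof is precisely your dissection of $[\nu/2,2\nu]$ into the oscillatory, turning-point, left and right zones, integrating the pointwise bounds of the lemma, with the right zone's $\rho^{-p/4}$ tail accounting for the restriction $p<4$.
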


Nevertheless, we fortunately have the combination $\sqrt{t}J_\nu'(t)J_\nu(r)\sqrt{r},$ allowing us to make use of the extra decay hidden there to achieve the estimates.

\begin{proposition}\label{kj}
Given $4/3<p<4$, there exists a finite constant $C_p$  such that
\[
\int_0^\infty\Big(\sum_\ell \big|K_{\nu(\ell)}^j g_\ell(r)\big|^2\Big)^{p/2}\; dr
\le C_p
\int_0^\infty\Big(\sum_\ell \big|g_\ell(r)\big|^2\Big)^{p/2}\; dr
\]
for $j=1,2,3,4$, any function $\nu:\mathbb{N}\to \mathbb{R}_+$ and every sequence of rapidly decreasing smooth functions~$\{g_\ell\}$.
\end{proposition}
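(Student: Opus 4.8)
The plan is to exploit that the four kernels come in adjoint pairs and then to isolate the genuinely hard region. Comparing the formulas one checks that $K_\nu^3(t,r)=K_\nu^1(r,t)$ and $K_\nu^4(t,r)=K_\nu^2(r,t)$, so $K_\nu^3=(K_\nu^1)^{*}$, $K_\nu^4=(K_\nu^2)^{*}$, and the family $\{K_\nu^j\}_{j=1}^{4}$ is closed under adjoints. Dualizing the asserted vector-valued inequality on $L^p$ gives the same inequality for the adjoint family on $L^{p'}$, and $4/3<p<4$ is self-dual; hence it is enough to treat $2\le p<4$. For such $p$ I would linearize the $\ell^2$-norm exactly as in the proof of Theorem~\ref{singular} and Corollary~\ref{meyer}: testing against a non-negative $g_0$ with $\|g_0\|_{(p/2)'}\le1$ and using $\|(Mg_0^{s})^{1/s}\|_{(p/2)'}\lesssim_s\|g_0\|_{(p/2)'}$ for a suitable $s>1$, the whole statement reduces to the scalar weighted estimate
\[
\int_0^\infty\big|K_\nu^j g(r)\big|^2\,g_0(r)\,dr\;\lesssim_{p}\;\int_0^\infty\big|g(r)\big|^2\,\big(Mg_0^{s}(r)\big)^{1/s}\,dr,\qquad j=1,2,3,4,
\]
to be proved uniformly in $\nu\ge1$; the admissible range of $s$ will be pinned down by the critical-region analysis below, and it is there that one needs $(p/2)'>2$, i.e. $p<4$.

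To prove this I would split each operator according to the critical square $Q_\nu=\{\nu/2\le t\le 2\nu\}\times\{\nu/2\le r\le 2\nu\}$ and its complement. On the complement, Lemma~\ref{vdc}\,i)--ii) gives $|\sqrt t\,J_\nu'(t)|\lesssim1$ and $|\sqrt r\,J_\nu(r)|\lesssim1$ (with exponential gain where $t\le\nu/2$ or $r\le\nu/2$), so there $K_\nu^j$ is a bounded multiplication composed with the Hilbert transform ($j=1,3$) or with the Hardy operator $h\mapsto\int_0^\infty h(r)(t+r)^{-1}dr$ ($j=2,4$) composed with another bounded multiplication; feeding in the oscillatory asymptotics of $J_\nu$ for $t,r\ge2\nu$ turns the far pieces into modulated Calder\'on--Zygmund operators. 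All of these obey the displayed weighted $L^2$ bound uniformly, by the weighted estimate of \cite{13} used in the proof of Theorem~\ref{singular}; the mixed pieces, with one variable near $\nu$ and the other far, are controlled by a Schur-type argument that uses $|t\,J_\nu'(t)\,J_\nu(t)|\lesssim1$ and the extra smallness $|\sqrt t\,J_\nu'(t)|\lesssim\nu^{-1/6}$ in the transition zone $|t-\nu|\lesssim\nu^{1/3}$.

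The heart of the matter is $Q_\nu$. There I would change to the Airy variable $\rho=(x-\nu)\nu^{-1/3}$; by Lemma~\ref{vdc}\,iii)--iv), on the convex side $x>\nu$ one has $|\sqrt x\,J_\nu(x)|\lesssim\nu^{1/6}\rho^{-1/4}$ and $|\sqrt x\,J_\nu'(x)|\lesssim\nu^{-1/6}\rho^{1/4}$, while on the concave side $x<\nu$ the powers of $\rho$ enter with the favorable sign. Inserting these into the kernels and rescaling, $K_\nu^1|_{Q_\nu}$ is dominated on the convex side by a constant times the model operator $h\mapsto\rho^{1/4}H(\rho^{-1/4}h)$, its adjoint $K_\nu^3$ by $h\mapsto\rho^{-1/4}H(\rho^{1/4}h)$, and the Hardy-type $K_\nu^2,K_\nu^4$ by the corresponding (essentially rank-one in $\rho$) Hardy operators carrying the same weights. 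Requiring all these model operators to be bounded on $L^p(\R_+)$, uniformly in the length $\nu^{2/3}$ of the rescaled square, amounts exactly to $4/3<p<4$: e.g. $\rho^{1/4}H(\rho^{-1/4}\cdot)$ is bounded iff $\rho^{p/4}\in A_p(\R_+)$, i.e. $p>4/3$, and $\rho^{-1/4}H(\rho^{1/4}\cdot)$ iff $\rho^{-p/4}\in A_p(\R_+)$, i.e. $p<4$, with the analogous $\nu$-uniform computations for $j=2,4$ closing in the same range. The one place where the oscillatory asymptotics fail is the narrow Airy core $|x-\nu|\lesssim\nu^{1/3}$, on which $\sqrt r\,J_\nu(r)\sim\nu^{1/6}$ is genuinely large; this is handled by Corollary~\ref{prodJ}, since $\tfrac1\nu\int_{\nu/2}^{2\nu}|\sqrt r\,J_\nu(r)|^p\,dr\lesssim_p1$ only for $p<4$ --- this is where the endpoint $p=4$ is really created --- while the balancing smallness $|\sqrt t\,J_\nu'(t)|\sim\nu^{-1/6}$ on the core ($|t\,J_\nu'(t)\,J_\nu(t)|\lesssim1$) is exactly what keeps the large factor from ever acting alone.

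Collecting the contributions of $Q_\nu$ and of its complement gives the scalar weighted $L^2$ bound uniformly in $\nu$, for each $j$; summing over $\ell$, applying H\"older and $\|(Mg_0^{s})^{1/s}\|_{(p/2)'}\lesssim_s1$ proves the proposition for $2\le p<4$, and duality covers $4/3<p<2$. The main obstacle throughout is the analysis on $Q_\nu$: obtaining the $\nu$-uniform bounds for the model operators and, in particular, taming the Airy core, where the large factor $\sqrt r\,J_\nu(r)$ must be played off against the small factor $\sqrt t\,J_\nu'(t)$ and controlled via Corollary~\ref{prodJ}; the rest is comparatively routine.
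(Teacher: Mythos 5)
Your architecture tracks the paper's quite closely: split at the critical interval $[\nu/2,2\nu]$, use Lemma~\ref{vdc} off it, work at the Airy scale $\nu^{1/3}$ on it, recover $K_\nu^3$ from $K_\nu^1$ by duality, and handle the $\ell^2$-sum with C\'ordoba--Fefferman/$A_p$ weighted machinery. Your identification of the endpoints is correct and arguably more transparent than the paper's: in the Airy variable the amplitudes $\rho^{\pm1/4}$ turn $K_\nu^1$ and $K_\nu^3$ into Hilbert transforms conjugated by power weights, and $\rho^{\pm p/4}\in A_p$ is exactly $p>4/3$, resp.\ $p<4$; this is the same numerology that surfaces in the paper as the condition $s>4/3$ in the off-diagonal estimates. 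However, there is a genuine gap at the heart of the argument. The step ``$K_\nu^1|_{Q_\nu}$ is dominated on the convex side by the model operator $h\mapsto\rho^{1/4}H(\rho^{-1/4}h)$'' is not a valid operator inequality: Lemma~\ref{vdc} only gives a pointwise bound on $|K_\nu^1(t,r)|$ by $\rho_t^{1/4}\rho_r^{-1/4}/|\rho_t-\rho_r|$, and one cannot pass from a majorization of the absolute value of a Hilbert-type kernel to a majorization of the operator, since the majorizing kernel is not integrable across the diagonal; nor is the amplitude $\sqrt{t}J_\nu'(t)J_\nu(r)\sqrt{r}$ a fixed bounded multiple of $\rho_t^{1/4}\rho_r^{-1/4}$, as it oscillates. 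Making this rigorous requires exactly the device of the paper's Case 4: decompose into dyadic Airy blocks $I_k^{\pm}=[\nu+2^k\nu^{1/3},\nu+2^{k+1}\nu^{1/3})$, on each of which the size bounds are essentially constant; retain the genuine Hilbert transform with a uniformly bounded amplitude $\theta_\nu^k$ on the diagonal blocks $C_k$; and take absolute values only on the off-diagonal blocks $A_k,B_k$, where H\"older with exponent $s>4/3$ produces $(Mf^s)^{1/s}$. Your proposal names the right model but omits this diagonal/off-diagonal separation, which is the actual content of the proof.

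A second, structural problem is that your reduction is internally inconsistent: you first linearize to the scalar weighted $L^2$ estimate $\int|K_\nu^j g|^2 g_0\lesssim\int|g|^2(Mg_0^s)^{1/s}$, but then analyze the model operators on unweighted $L^p$. The C\'ordoba--Fefferman inequality is available for $H$ and $M$ themselves, not for $H$ conjugated by $\rho^{\pm1/4}$, so the two halves of the argument do not connect as written. The paper avoids this by first establishing a pointwise vector-valued domination of $T_{\alpha\beta}f_\ell$ by $H(f_\ell\theta_\nu)$, $(Mf_\ell^s)^{1/s}$ and $\sum_k H(f_\ell\theta_\nu^k\chi_{I_k^+})$ with uniformly bounded amplitudes, and only then invoking the classical vector-valued weighted inequalities for $H$ and $M$; I would restructure your argument the same way, keeping your model-operator computation as the guide for why $4/3<p<4$ is the right range.
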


\begin{proof}
We will consider the operator $K_\nu^1$ with kernel 
\[
K_\nu^1(t,r)=
 \frac{\sqrt{t}J_\nu'(t)J_\nu(r)\sqrt{r}}{2(t-r)}.
\]
The estimate for $K_\nu^3$ will follow as a result of duality; the cases $K_\nu^2$ 
and 
$K_\nu^4$ being less singular are easier to handle and the details will be left to the reader.
 
Fixing $\nu>0$, we consider the partition 
\[
 [0,\infty)=
[0,\frac 12 \nu)\cup [\frac 12\nu,2\nu)\cup [2\nu,\infty)
=
I_0^\nu
\cup I_c^\nu
\cup I_\infty^\nu
\]
and the corresponding splitting of the kernel. 
Let us denote by $T_{\alpha,\beta}^\nu$ the integral operator with the kernel
\[
 \chi_{I_\alpha^\nu}(t)K^1_\nu(t,r)\chi_{I_\beta^\nu}(t),
\qquad\alpha,\beta = 0,c,\infty.
\]

We consider four cases.

\

\noindent{\bf Case 1:} $\alpha,\beta=0,\infty$ or $\{\alpha,\beta\}=\{0,c\}$.

The cases $\alpha,\beta=0,\infty$ are the easier, because in $I_0^\nu\cup I_\infty^\nu$ both 
$\sqrt{t}J_\nu'(t)$
and 
$\sqrt{t}J_\nu(t)$
are uniformly bounded, and the estimate in the proposition is reduced to a well-known inequality for the Hilbert transform.

Similarly when $\alpha=0$, $\beta=c$ or $\alpha=c$, $\beta=0$; we have that 
\[
 \sup_{t\in I_0^\nu}
\big|\sqrt{t}J_\nu'(t)\big|
 \sup_{r\in I_c^\nu}
\big|\sqrt{r}J_\nu(r)\big|
+
 \sup_{t\in I_0^\nu}
\big|\sqrt{t}J_\nu(t)\big|
 \sup_{r\in I_c^\nu}
\big|\sqrt{r}J_\nu'(r)\big|
\lesssim 1
\]
uniformly in $\nu$ (Lemma~\ref{vdc}).

\

\noindent{\bf Case 2:} $\alpha=c$, $\beta=\infty$.

Let us consider the partition of the interval $I_c^\nu$ through the sets
\[
 G_\tau^+
=\big[\nu+\tau\nu^{1/3},\nu+(\tau+1)\nu^{1/3}\big)
\text{ and }G_\tau^+
=\big[\nu-(\tau+1)\nu^{1/3},\nu-\tau\nu^{1/3}\big)
\]
for $\nu=0,1,2,\dots,\big[\frac 12 \nu^{2/3}\big]$.

We have
\begin{eqnarray*}
 &&
\sum_\tau
\chi_{G_\tau^+}(t)
\big|
\sqrt{t}J_\nu'(t)
\big|
\Big|
\int_{2\nu}^\infty
\frac{\sqrt{r}J_\nu(r)}{t-r}
f(r)\chi_{I_\infty^\nu}(r)\; dr
\Big|
\\
&&\qquad\ \qquad
+
\sum_\tau
\chi_{G_\tau^-}(t)
\big|
\sqrt{t}J_\nu'(t)
\big|
\Big|
\int_{2\nu}^\infty
\frac{\sqrt{r}J_\nu(r)}{t-r}
f(r)\chi_{I_\infty^\nu}(r)\; dr
\Big|
\\
&&\lesssim
\nu^{-1/6}
\sum_\tau
\big(
\tau^{1/4}\chi_{G_\tau^+}(t)
+
\tau^{-2}\chi_{G_\tau^-}(t)
\big)
\Big|
H\big(
f(r)\sqrt{r}J_\nu(r)\chi_{I_\infty^\nu}(r)
\big)(t)
\Big|
\\
&&\lesssim
\Big|
H\big(
f(r)\sqrt{r}J_\nu(r)\chi_{I_\infty^\nu}(r)
\big)(t)
\Big| = |H(f(r)\theta_\nu (r))(t)|
\end{eqnarray*}
where $\theta_\nu (r)= \big| \sqrt{r}J_\nu(r)\chi_{I_\infty^\nu}(r) \big|\lesssim 1$ uniformly in $\nu$.

\

\noindent{\bf Case 3:} $\alpha=\infty$, $\beta=c$.

Since $\big| \sqrt{r}J_\nu(r) \big|\lesssim 1$ uniformly in $\nu$ when $r\ge \frac 32\nu$, the part of the estimate corresponding to that region trivializes.

For the remainder we have, with the notation of the previous case,
\begin{eqnarray*}
&&\chi_{[2\nu,\infty)}(t)
\Big(
\sum_{\tau}
\frac{1}{|\nu|}
\int_{G_\tau^+}
\frac{|f(r)|}{(1+\tau)^{1/4}\nu^{1/3}}\;dr
+
\sum_{\tau}
\frac{1}{|\nu|}
\int_{G_\tau^-}
\frac{|f(r)|}{(1+\tau)\nu^{1/3}}\;dr
\Big)
\\
&&
\lesssim 
\chi_{[2\nu,\infty)}(t)
Mf(t),
\end{eqnarray*}
where as before $Mf$ denotes the Hardy-Littlewood maximal function. 

\

Putting all the estimates together for the cases 1, 2 and 3, we get
\begin{eqnarray*}
\int_0^\infty
\Big|
\sum
\big| T_{\alpha,\beta}^\nu f_\ell\big|^2
\Big|^{p/2}
\hspace{-3pt}
&\lesssim&
\hspace{-3pt}
\int_0^\infty
\Big|
\sum
\big| H( f_\ell\theta_\nu)\big|^2
\Big|^{p/2}
+
\int_0^\infty
\Big|
\sum
\big| M( f_\ell)\big|^2
\Big|^{p/2}
\\
&\lesssim&
\int_0^\infty
\Big|
\sum
\big| f_\ell\big|^2
\Big|^{p/2}.
\end{eqnarray*}

\

\noindent{\bf Case 4:} $\alpha=c$, $\beta=c$.

It will be convenient to modify the splitting and for a fixed $\nu$ let us define
\[
 I_k^+
=\big[\nu+2^k\nu^{1/3},\nu+2^{k+1}\nu^{1/3}\big)
,\qquad
 I_k^-
=\big[\nu-2^{k+1}\nu^{1/3},\nu-2^{k}\nu^{1/3}\big)
\]
where $0\leq k \leq \big[\frac23 \log_2 \nu\big]$,
and $I_0= 
\big[\nu-\nu^{1/3},\nu+\nu^{1/3}\big)$. Define also
\[
I^+=\bigcup_k  I_k^+
\qquad\text{and}\qquad 
I^-=\big(\bigcup_k  I_k^-\big)\cup I_0.
\]
Then we have the kernels
\[
 \chi_{I^\pm}(t)
\frac{\sqrt{t}J_\nu'(t)J_\nu(r)\sqrt{r}}{t-r}
 \chi_{I^\pm}(t).
\]
Taking into account once more the estimate of Lemma~\ref{vdc}, it is obvious that the most dangerous is the $++$ term. Therefore, in the following, we shall present the details of the proof corresponding to that case, leaving the other three as exercises.

To estimate the action of the operator with kernel 
$
 \chi_{I^+}(t)
K_\nu^1(t,r)
 \chi_{I^+}(t)
$
on a function $f_\ell$ at a point $t\in I_k^+$, we divide the integration in three parts:
\begin{align*}
 A_k (t)&=\chi_{I_k^+}(t)
\int\sum_{j\le k-1}
K_\nu^1(t,r)\chi_{I_j^+}(r)f_\ell(r)\; dr,
\\
 B_k (t)&=\chi_{I_k^+}(t)
\int\sum_{j\ge k+1}
K_\nu^1(t,r)\chi_{I_j^+}(r)f_\ell(r)\; dr,
\\
 C_k (t)&=\chi_{I_k^+}(t)
\int
K_\nu^1(t,r)\chi_{I_k^+}(r)f_\ell(r)\; dr.
\end{align*}

First we estimate $A$. We have
\begin{eqnarray*}
|A_k (t)|
&\lesssim&
\sum_{j\le k-1}
\big|\sqrt{t}J_\nu'(t)\big|
\Big|
\int_{I_j^+}
\frac{\sqrt{r}J_\nu(r)}{t-r}f_\ell(r)\; dr
\Big|
\\
&\lesssim&
\sum_{j\le k-1}
2^{-3k/4}
\int_{I_j^+}
\big|J_\nu(r)f_\ell(r)\big|\; dr
\end{eqnarray*}
that gives
\begin{eqnarray*}
|A_k (t)|
&\lesssim&
2^{-3k/4}
\int_\nu^{\nu+2^k\nu^{1/3}}
\big|f_\ell(r)\big|
\sum_{j\le k-1}
\frac{2^{-j/4}}{\nu^{1/3}}
\chi_{I_j^+}(r)
\; dr
\\
&\lesssim&
2^{-3k/4}\nu^{-1/3}
\Big(
\int_\nu^{\nu+2^k\nu^{1/3}}
\big|f_\ell(r)\big|^s
\; dr
\Big)^{1/s}
\big(
\sum_{j\le k-1}2^{-js'/4}2^j \nu^{1/3}
\big)^{1/s'}
\end{eqnarray*}
where $s$ and $s'$ are conjugate H\"older exponents. Observe that if $s>4/3$ then $s'<4$ and we have
\[
\big(
\sum_{j\le k-1}
2^{j(1-s'/4)}
\big)^{1/s'}
\approx
2^{k(1-s'/4)/s'}
\]
and therefore
\[
|A_k (t)|
\lesssim
\Big(
2^{-k}\nu^{-1/3}
\int_\nu^{\nu+2^k\nu^{1/3}}
\big|f_\ell(r)\big|^s
\; dr
\Big)^{1/s}
\lesssim
\big( Mf_\ell^s(t)\big)^{1/s}
\qquad\text{for }s>\frac 43.
\]

Next we estimate $B$:
\begin{eqnarray*}
|B_k (t)|
&\lesssim&
\sum_{j\ge k+1}
\big|\sqrt{t}J_\nu'(t)\big|
\Big|
\int_{I_j^+}
\frac{\sqrt{r}J_\nu(r)}{t-r}f_\ell(r)\; dr
\Big|
\\
&\lesssim&
\sum_{j\ge k+1}
\nu\frac{2^{k/4}}{\nu^{2/3}}\frac{1}{2^j\nu^{1/3}}
\int_{I_j^+}
\big|f_\ell(r)\big|
\frac{2^{-j/4}}{\nu^{1/3}}\; dr
\end{eqnarray*}
that gives
\[
|B_k (t)|
\lesssim
\sum_{j\ge k+1}
2^{-|j-k|/4}\frac{1}{2^j\nu^{1/3}}
\int_{\nu+2^j\nu^{1/3}}^{\nu+2^{j+1}\nu^{1/3}}
\big|f_\ell(r)\big|
\; dr
\lesssim Mf(t)
\]
where we have applied several times the estimates of Lemma~\ref{vdc}.

Finally one needs to control the diagonal terms $C$:
\[
 C_k (t)
=
\sqrt{t}J_\nu'(t)
H\big(\chi_{I_k^+}(r)f_\ell(r)\sqrt{r}J_\nu(r)\big)(t)\chi_{I_k^+}(t)
\]
We have
\begin{eqnarray*}
(\sum_k\big|C_k (t)\big|^2)^\frac12
&\le &
(\sum_k
|\chi_{I_k^+}(t)\nu^{-1/6}2^{k/4}
\big|
H\big(\chi_{I_k^+}(r)f_\ell(r)\sqrt{r}J_\nu(r)\big)(t)
\big|^2)^\frac12
\\
&\le &
\Big(
\sum_k
\big|
H\big(\chi_{I_k^+}(r)f_\ell(r)\nu^{-1/6}2^{k/4}\sqrt{r}J_\nu(r)\big)(t)
\big|^2
\Big)^{1/2}
\\
&=&
\Big(
\sum_k
\big|
H\big(\chi_{I_k^+}(r)f_\ell(r)\theta^k_\nu(r)\big)(t)
\big|^2
\Big)^{1/2}
\end{eqnarray*}
where $\big|\theta^k_\nu(r)\big|\lesssim 1$ uniformly in $\nu$ and $k$.

That is, for each $\alpha, \beta$, we have obtained the pointwise estimate 
\begin{eqnarray*}
(\sum_\ell |T_{\alpha \beta}f_\ell|^2)^\frac12
&\lesssim& 
(\sum_\ell |H(f_\ell\cdot\theta_\nu)|^2)^\frac12 + (\sum_\ell |Mf^s_\ell|^\frac2s)^\frac12 
\\
&&+\, (\sum_\ell \sum_k |H(f_\ell\cdot\theta^k_\nu\cdot\chi_{I_k^+})|^2)^\frac12,
\end{eqnarray*}
for $s>\frac43$. Where the functions $\theta^\nu$, $\theta^\nu_k$ are uniformly bounded.

Therefore, we can finish the proof of Proposition~\ref{kj} with the help of some well-known estimates for the Hilbert transform and the Hardy-Littlewood maximal functions (see \cite{18}, \cite{19}, \cite{20}), namely
\begin{align*}
&\int\Big(
\sum \big| Hf_\ell(x)\big|^s
\Big)^{p/s}\omega(x)\; dx
+ \int\Big(
\sum \big| Mf_\ell(x)\big|^s
\Big)^{p/s}\omega(x)\; dx
\\
&\qquad\ \qquad\lesssim
C_{p,s}(\omega)
 \int\Big(
\sum \big| f_\ell(x)\big|^s
\Big)^{p/s}\omega(x)\; dx.
\end{align*}
So long as the weight $\omega$ belongs to the class $A_p$, that is
\[
 \sup_I
\big(\frac{1}{|I|}
\int_I\omega
\big)
\big(\frac{1}{|I|}
\int_I\omega^{-1/(p-1)}
\big)^{p-1}
=|\omega|_p<\infty.
\]
Furthermore the constant $C_{p,s}(\omega)$ above depends only upon $p$, $s$ and $|\omega|_p$.
\end{proof}

\begin{proof}[Proof of Theorem~\ref{disc}.]
Let us note that in order to prove the theorem it is enough to show the following inequality:
\begin{equation}\label{ineqth4}
\int_0^\infty
\Big|
\sum_\ell \big|K_\nu g_\ell(r)\big|^2
\Big|^{p/2}
r^{(n-1)(1-p/2)}\; dr
\lesssim
\int_0^\infty
\Big|
\sum_\ell \big|g_\ell(r)\big|^2
\Big|^{p/2}
r^{(n-1)(1-p/2)}\; dr
\end{equation}
so long as $2n/(n+1)<p<2n/(n-1)$.

But this will be a consequence of Proposition~\ref{kj} together with some observations about weights in the class $A_p$.

Since it happens that $|x|^\beta\in A_p$ if and only if $-1<\beta<p-1$, we can conclude that $|x|^{(n-1)(1-p/2)}\in A_p$ if and only if $2n/(n+1)<p<2n/(n-1)$.
Furthermore, if $2\le p< 2n/(n-1)$ we have that $|x|^{(n-1)(1-p/2)}\in A_q$, for every $q\ge 1$.  However in the interval $2n/(n+1)<p\le 2$ we have that
 $|x|^{-p/s'+(n-1)(1-p/2)}\in A_{p/s}$
under the hypothesis $s<4$, $1/s+1/s'=1$.

The part of the equality \eqref{ineqth4} which corresponds to the critical intervals:
\[
K_\nu^c(t,r)= \chi_{I_c^\nu}(t)
\frac{\sqrt{t}J_\nu'(t)J_\nu(r)\sqrt{r}}{t-r}
 \chi_{I_c^\nu}(t).
\]
is a direct consequence of Proposition~\ref{kj} for the following reasons:
\begin{eqnarray*}
 \int_0^\infty
\Big(
\sum_\ell \big|K_\nu^c g_\ell(t)\big|^2
\Big)^{p/2}t^\alpha\; dt
&\lesssim&
\sum_{n=0}^\infty
2^{n\alpha}
\int_{2^{n-1}}^{2^{n+1}}
\Big(
\sum_{\ell:\nu\sim 2^n} 
\big|K_\nu^c g_\ell(t)\big|^2
\Big)^{p/2}\; dt
\\
&\lesssim&
\sum_{n=0}^\infty
2^{n\alpha}
\int_{0}^{\infty}
\Big(
\sum_{\ell:\nu\sim 2^n} 
\big|g_\ell(t)\big|^2
\chi_{I_c^\nu}(t)
\Big)^{p/2}\; dt
\\
&\lesssim&
\sum_{n=0}^\infty
\int_{2^{n-1}}^{2^{n+1}}
\Big(
\sum_{\ell:\nu\sim 2^n} 
\big|g_\ell(t)\big|^2
\Big)^{p/2}t^\alpha\; dt
\\
&\lesssim&
\int_{0}^{\infty}
\Big(
\sum_{\ell:\nu\sim 2^n} 
\big|g_\ell(t)\big|^2
\Big)^{p/2}t^\alpha\; dt,
\end{eqnarray*}
and the result follows taking $\alpha = (n-1)(1-p/2)$.

The remainder terms can be controlled similarly, except for one of them which needs extra arguments, namely
\[
\chi_{I_\infty^\nu}(t)
K_\nu(t,r)
 \chi_{I_c^\nu}(t).
\]

The case $2\le p<2n/(n-1)$ is easy because then $|x|^{(n-1)(1-p/2)}\in A_{p/s}$ for $4/3<s<p$ and we have the estimate
\begin{eqnarray*}
&& \int_0^\infty
\Big(
\sum_\ell \big|K_\nu g_\ell(x)\big|^2
\Big)^{p/2}x^{(n-1)(1-p/2)}\; dx
\\
&&\qquad\ \qquad
\lesssim
 \int_0^\infty
\Big(
\sum_\ell \big|M( g_\ell^s)(x)\big|^{2/s}
\Big)^{p/2}x^{(n-1)(1-p/2)}\; dx
\\
&&\qquad\ \qquad
\lesssim
 \int_0^\infty
\Big(
\sum_\ell \big| g_\ell(x)\big|^{2}
\Big)^{p/2}x^{(n-1)(1-p/2)}\; dx.
\end{eqnarray*}
To treat the case $2n/(n+1)<p\le 2$ let us observe that for $t\ge 3\nu$ we have:
\begin{eqnarray*}
\Big|
\int_0^\infty
K_\nu^1(t,r)g_\ell(r)\chi_{I_c^\nu}(r)\; dr
\Big|
&\lesssim&
\frac 1t\int_{I_c^\nu}\sqrt{r}\big|J_\nu(r)\big|
|g_\ell(r)|\; dr
\\
&\lesssim&
\frac 1t\Big(\int_{I_c^\nu}
|g_\ell(r)|^s\; dr\Big)^{1/s}
\nu^{1/s'}
\\
&\lesssim&
\big(\frac{\nu}{t}\big)^{1/s'}
\big(Mg_\ell^s(t)\big)^{1/s}
\quad\text{ for }s>4/3. 
\end{eqnarray*}
Then we get the integral
\[
 \int_0^\infty 
\Big(\sum_\ell
\big(\frac{\nu}{t}\big)^{2/s'}
\big(Mg_\ell^s(t)\big)^{2/s}
\Big)^{p/2}
t^{(n-1)(1-p/2)}\; dt
\]
and since
$-1<-p/s'+(n-1)(1-p/2)<p/s-1$ it happens that 
$|t|^{-p/s'+(n-1)(1-p/2)}\in A_{p/s}$
allowing us to obtain the estimate needed to finish the proof:
\begin{eqnarray*}
&& 
\int_0^\infty 
\Big(\sum_\ell
\big(\frac{\nu}{t}\big)^{2/s'}
\big(Mg_\ell^s(t)\big)^{2/s}
\Big)^{p/2}
t^{(n-1)(1-p/2)}\; dt
\\
&&\qquad\ \qquad
\lesssim
\int_0^\infty 
\Big(\sum_\ell
\nu^{2/s'}\big|g_\ell(t)\big|^2
\chi_{I_c^\nu}(t)
\Big)^{p/2}
t^{-p/s'+(n-1)(1-p/2)}\; dt
\\
&&\qquad\ \qquad
\lesssim
\int_0^\infty 
\Big(\sum_\ell
\big|g_\ell(t)\big|^2
\Big)^{p/2}
t^{(n-1)(1-p/2)}\; dt.
\end{eqnarray*}
\end{proof}

With similar methods to those developed above, one can obtain the following restriction theorem due to L. Vega \cite{vega}.

\begin{theorem}\label{restriction}
In $\R^n$, $n\ge 2$, given $1\le p<2n/(n+1)$ there exists a finite constant $C_{p,n}$ such that
\[
\|\widehat{f}\,\|_{L^2(S^{n-1})}
\le C_{p,n}
\|{f}\|_{ L^p_{\text{\rm rad}}L^2_{\text{\rm ang}}(\R^n)}
\]
for every rapidly decreasing smooth function $f$.
\end{theorem}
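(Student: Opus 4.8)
The plan is to pass to the spherical harmonic expansion $f(x)=\sum_{k,\ell}f^\ell_k(|x|)Y^\ell_k(x/|x|)$, use the Hankel transform formula recorded above to read off $\widehat f$ on $S^{n-1}$, and thereby reduce the inequality to a single uniform estimate for weighted $L^{p'}$-norms of Bessel functions --- exactly the sort of estimate furnished by Lemma~\ref{vdc} and Corollary~\ref{prodJ}. Concretely: since $\{Y^\ell_k\}$ is orthonormal on $S^{n-1}$ we have $\int_{S^{n-1}}|f(r\theta)|^2\,d\sigma(\theta)=\sum_{k,\ell}|f^\ell_k(r)|^2$, so with $F(r)=\big(\sum_{k,\ell}|f^\ell_k(r)|^2\big)^{1/2}$ the norm on the right of the statement equals $\|F\|_{L^p((0,\infty),\,r^{n-1}dr)}$. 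On the other side, putting $\nu(k)=k+\tfrac{n}{2}-1$, the formula for $\widehat f$ from the paper restricted to $|\xi|=1$ reads $\widehat f(\theta)=\sum_{k,\ell}i^kF^\ell_k(1)\,Y^\ell_k(\theta)$ with $F^\ell_k(1)=\int_0^\infty f^\ell_k(s)\,s^{n/2}J_{\nu(k)}(s)\,ds$, whence $\|\widehat f\,\|_{L^2(S^{n-1})}^2=\sum_{k,\ell}|F^\ell_k(1)|^2$.

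Next I would reduce this to a Bessel integral. Dualizing the $\ell^2$ norm, pair $\{F^\ell_k(1)\}$ against an arbitrary $\{c^\ell_k\}$ with $\sum_{k,\ell}|c^\ell_k|^2\le1$, set $a_k=\sum_\ell|c^\ell_k|^2$ (so $\sum_k a_k\le1$), apply Cauchy--Schwarz in $(k,\ell)$ for each fixed $s$, then Hölder in $s$ with exponents $(p,p')$ while distributing the weight $s^{n-1}$, and finally Minkowski's inequality in $L^{p'/2}(s^{\gamma}\,ds)$ --- legitimate because $p\le 2$ gives $p'/2\ge1$. A short bookkeeping of exponents (using $p'/p=p'-1$) identifies the weight as $s^{\gamma}$ with $\gamma=(2-n)\tfrac{p'}{2}+n-1$ and yields
\[
\|\widehat f\,\|_{L^2(S^{n-1})}\ \le\ C_{p,n}\,\|f\|_{L^p_{\text{\rm rad}}L^2_{\text{\rm ang}}(\R^n)},\qquad C_{p,n}^{\,p'}=\sup_{k\ge0}\int_0^\infty\big|J_{\nu(k)}(s)\big|^{p'}\,s^{\gamma}\,ds .
\]
(When $p=1$ the same scheme replaces the right-hand quantity by $\sup_k\sup_{s>0}s^{(2-n)/2}|J_{\nu(k)}(s)|$.) So everything comes down to showing that this supremum is finite precisely when $p'>2n/(n-1)$, i.e.\ $1\le p<2n/(n+1)$.

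For the Bessel integral I would fix $\nu=\nu(k)\ge1$ --- the only term with $\nu<1$ is $k=0$ in dimension $n\in\{2,3\}$, where $J_0$, resp.\ $J_{1/2}$, is an explicit bounded function and the integral is checked by hand --- and split $[0,\infty)=I_0^\nu\cup I_c^\nu\cup I_\infty^\nu$ exactly as in Section~3. On $I_\infty^\nu=[2\nu,\infty)$ Lemma~\ref{vdc}(i) gives $|J_\nu(s)|\lesssim s^{-1/2}$, so the integrand is $\lesssim s^{(n-1)(1-p'/2)}$ and $\int_{2\nu}^\infty$ converges with a $\nu$-uniform bound precisely because $(n-1)(p'/2-1)>1$, i.e.\ $p'>2n/(n-1)$; this is exactly where $C_{p,n}$ blows up as $p\uparrow 2n/(n+1)$. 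On $I_0^\nu=[0,\nu/2]$ the bound $|J_\nu(s)|\le(s/2)^\nu/\Gamma(\nu+1)$ near $0$ (which, since $\nu\ge\tfrac{n-2}{2}$, dominates any negative power of $s$ that can occur in $s^{\gamma}$) combined with Lemma~\ref{vdc}(ii) on the remainder makes this contribution exponentially small in $\nu$. The heart of the matter is the critical annulus $I_c^\nu=[\nu/2,2\nu]$, where $s\approx\nu$ and hence
\[
\int_{I_c^\nu}\big|J_\nu(s)\big|^{p'}\,s^{\gamma}\,ds\ \approx\ \nu^{\,\gamma-p'/2}\int_{I_c^\nu}\big|J_\nu(s)\,s^{1/2}\big|^{p'}\,ds .
\]
For $p'<4$, Corollary~\ref{prodJ} bounds the last integral by $C_{p'}\nu$, so the left side is $\lesssim\nu^{\,n-(n-1)p'/2}$, which is $\lesssim1$ exactly when $p'>2n/(n-1)$. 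For $p'\ge4$ one first runs this with an auxiliary exponent $q_0<4$ and then absorbs the remaining factor $|J_\nu(s)|^{p'-q_0}$ by the transition-region bound $|J_\nu(s)|\lesssim\nu^{-1/3}$, valid uniformly on $I_c^\nu$ by Lemma~\ref{vdc}(iii)--(iv); a one-line computation shows the resulting power of $\nu$ is negative for every $p'>2n/(n-1)$ (in the borderline dimension $n=2$, where $2n/(n-1)=4$, one uses only that $q_0$ may be taken arbitrarily close to $4$ from below).

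The main obstacle is this last step, and within it the critical annulus for large $p'$: Corollary~\ref{prodJ} is only available for $p'<4$, while the range $p'>2n/(n-1)$ always contains exponents $\ge4$ (indeed these are the only ones when $n=2$), so those must be reached via the interpolation with the peak bound $|J_\nu|\lesssim\nu^{-1/3}$ described above. By contrast, the spherical-harmonic bookkeeping, the Cauchy--Schwarz/Hölder/Minkowski reduction, and the $I_0^\nu$ and $I_\infty^\nu$ pieces are all routine once the pointwise Bessel estimates of Lemma~\ref{vdc} are in hand.
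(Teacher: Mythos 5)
Your argument is correct, but it takes a genuinely different route from the paper. The paper dualizes to the extension estimate $\|\widehat{Y_k\,d\sigma}\|$, keeps the full square function $\big(\sum_k|a_k|^2|J_{k-1+n/2}(r)|^2\big)^{1/2}$ inside the $L^q(r^{(1-n/2)q+n-1}dr)$ norm, and then analyzes the interaction between the degree $k$ and the radial variable $r$ directly: a dyadic decomposition $r\in[M,2M]$, a three-way split of the $k$-sum, and, in the critical range $M/2\le k\le 4M$, a partition into intervals $G_\alpha$ of length $M^{1/3}$ followed by a discrete fractional-integration (Young-type) estimate on the quantities $A_\alpha=\sum_{k\in G_\alpha}|a_k|^2$. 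You instead decouple the spherical harmonics at the outset: Cauchy--Schwarz in $(k,\ell)$, H\"older in $s$, and Minkowski in $L^{p'/2}(s^\gamma ds)$ reduce everything to the single scalar estimate $\sup_\nu\int_0^\infty|J_\nu(s)|^{p'}s^\gamma ds<\infty$, which you then verify region by region; since the target norm on the coefficients is exactly $\ell^2$, Minkowski loses nothing here, and your exponent bookkeeping ($\gamma-p'/2+1=n-(n-1)p'/2\le 0$ iff $p'\ge 2n/(n-1)$, plus the interpolation with $|J_\nu|\lesssim\nu^{-1/3}$ on the critical annulus to reach $p'\ge4$, including the borderline $n=2$) checks out. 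What each approach buys: yours is shorter and isolates the analytic content in a clean one-parameter Bessel inequality, and you correctly flag the two points the paper glosses over less explicitly (the need for the $s^\nu$ vanishing near the origin when $\gamma\le-1$, and the fact that Corollary~\ref{prodJ} alone never covers the whole range since $p'>2n/(n-1)$ always contains exponents $\ge4$); the paper's coupled square-function argument is what one would need if the $\ell^2$ norm on the coefficients were replaced by something Minkowski cannot reach. Two small caveats: your use of $|J_\nu|\lesssim\nu^{-1/3}$ in the transition zone $|s-\nu|<\nu^{1/3}$ is standard but not literally contained in Lemma~\ref{vdc} as stated (its cases iii)--iv) start at $\rho\ge1$), so cite Watson or extend the lemma; and be aware that Lemma~\ref{vdc}(ii) cannot hold up to $r=\tfrac32\nu$ as printed, so your $I_0^\nu$ discussion should rely on the exponential smallness of $J_\nu$ only well to the left of the turning point, as you in fact do.
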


Note that the interval $1\le p<2n/(n+1)$ is bigger than $1\le p<2(n+1)/(n+3)$, which corresponds to the Stein-Tomas restriction theorem (\cite{8}, \cite{9}),
\[
 \|\widehat{f}\,\|_{L^2(S^{n-1})}
\lesssim
 \|{f}\|_{ L^p(\R^n) }.
\]

The proof of the restriction estimate above will be obtained by duality from the corresponding extension estimate
\[
\|\widehat{f d\sigma}\|_{ L^q_{\text{\rm rad}}L^2_{\text{\rm ang}}(\R^n)}
\lesssim_{q,n}
\|{f}\|_{L^2(S^{n-1})}
\qquad\text{for }
q>\frac{2n}{n-1}.
\]
Taking $f(\theta)\equiv 1$ it is easy to check that the estimate above cannot hold when $q\le 2n/(n-1)$, which amounts to show that the range $1\le p<2n/(n+1)$ is sharp in the statement of the theorem.

\

Given $f\in L^2(S^{n-1})$ we have an expansion $f=\sum a_kY_k$ where $Y_k$ is a spherical harmonic of degree $k$, normalized as $\|Y_k\|_{ L^2(S^{n-1})}=1$. Then we can invoke the formula (\cite{16}, \cite{22})
\begin{equation}\label{nhp}
 \widehat{Y_k\; d\sigma}(\xi)=
2\pi i^{k}|\xi|^{1-n/2}
J_{k-1+\frac{n}{2}}(2\pi |\xi|)Y_k\Big(\frac{\xi}{|\xi|}\Big).
\end{equation}

\begin{proof}[Proof of Theorem~\ref{restriction}.]
As we have mentioned, any $f\in L^2(S^{n-1})$ can be written in the form $f=\sum a_kY_k$ where $Y_k$ is a normalized harmonic polynomial of degree $k$ and by \eqref{nhp} it suffices to prove the following inequality:
\[
 \bigg(
\int_0^\infty
\Big(\sum_k |a_k|^2\big|J_{k-1+\frac{n}{2}}(r)\big|^2
\Big)^{q/2}
r^{(1-n/2)q+n-1}\; dr
\bigg)^{1/q}
\lesssim_q\big(\sum |a_k|^2\big)^{1/2},
\]
for every $q>2n/(n-1)$. 

Since $\big|J_\nu(r)\big|\lesssim r^\nu$ when $0\le r\le 1$, the contribution of that interval to the integral above trivializes. 
Let us then consider for a fixed $M=2^m$ ($m=0,1,\dots$) the integral $I_M$ as above but restricted to $M\le r\le 2M$.

We shall first consider the case $n=2$. Then we have

\begin{eqnarray*}
 I_M&=&\int_M^{2M}
\Big(\sum_k |a_k|^2\big|J_{k}(r)\big|^2
\Big)^{q/2}
r\;dr
=
\int_M^{2M}
\Big(\sum_{k< M/2} 
\Big)^{q/2}
r\;dr
\\
& &+\int_M^{2M}
\Big(\sum_{M/2\le k\le 4M} 
\Big)^{q/2}
r\;dr
+\int_M^{2M}
\Big(\sum_{k> 4M} 
\Big)^{q/2}
r\;dr
\\
&=&I_M^1+I_M^2+I_M^3.
\end{eqnarray*}
To estimate $I_M^1$ we observe that $\big|J_k(r)\big|\lesssim 1/r^{1/2}$ uniformly in $k$ when $k\le M/2\le r/2$. Therefore

\[
 I_M^1\lesssim 
\int_M^{2M}
\big(\sum_{k<M/2} |a_k|^2\big)^{q/2}r^{-q/2+1}\; dr
\lesssim
M^{(4-q)/2}\big(\sum |a_k|^2\big)^{q/2}.
\]
Regarding $I_M^3$, we have $k> 4M\ge 2r$ and $|J_k(r)|\lesssim 1/k$ which yields
\[
 I_M^3\lesssim 
\int_M^{2M}
\big(\sum_{k>4M} k^{-2}|a_k|^2\big)^{q/2}r\; dr
\lesssim
M^{2-q}\big(\sum |a_k|^2\big)^{q/2}.
\]
Finally let us write
\[
 I_M^2\lesssim 
\sum_\alpha\int_{G_\alpha}
\big(\sum_{M/2\le k\le 4M}|a_k|^2|J_k(r)|^2\big)^{q/2}r\; dr
\]
where
\[
 G_\alpha=\big[\frac{M}{2}+\alpha M^{1/3},\frac{M}{2}+(\alpha+1) M^{1/3}\big]
\]
is an interval in the real line and $\alpha=0,1,2,\dots,\big[\frac 32 M^{2/3}\big]$.

Let us also define
\[
 A_\alpha =\sum_{k\in G_\alpha}|a_k|^2.
\]
Then we have
\begin{eqnarray*}
I_M^2&\le&
\sum_\beta\int_{G_\beta}
\bigg(\sum_{\alpha\le \beta}
A_\alpha
\Big(
\frac{1}{(|\alpha-\beta|+1)^{1/4}M^{1/3}}
\Big)^2\bigg)^{q/2}rdr
\\
&& +
\sum_\beta\int_{G_\beta}
\bigg(\sum_{\alpha\geq \beta}
A_\alpha
\Big(
\frac{1}{(|\alpha-\beta|+1)M^{1/3}}
\Big)^2\bigg)^{q/2}rdr
= I_M^{2,1}+I_M^{2,2}.
\end{eqnarray*}
And
\begin{eqnarray*}
I_M^{2,1}&\lesssim&
\sum_{\beta\le \frac 32 M^{2/3}}
\bigg(\sum_{\alpha\le \beta}
\frac{A_\alpha}{(|\alpha-\beta|+1)^{1/2}}
\bigg)^{q/2}
M^{(4-q)/3}
\lesssim
M^{(4-q)/3}
\Big(
\sum_\gamma A_\gamma^s
\Big)^{q/2s}
\\
&\lesssim&
M^{(4-q)/3}
\Big(
\sum_\gamma A_\gamma
\Big)^{q/2}
\lesssim
M^{(4-q)/3}
\Big(
\sum_k |a_k|^2
\Big)^{q/2}
\end{eqnarray*}
where we have taken $2/q=1/s-1/2$ and used the fact that $q>4$. 

The term $I_M^{2,2}$ is controlled by the same argument. Therefore, when $n=2$, $q>4$, adding all the estimates above over the dyadic intervals $[M,2M]$, $M=2^m$, we get 
\[
 \sum_m
\big(
2^{m(4-q)/2}
+
2^{m(4-q)/3}
\big)
\Big(
\sum_k |a_k|^2
\Big)^{q/2}
\lesssim
\Big(
\sum_k |a_k|^2
\Big)^{q/2}
\]
and the theorem is proved in this case.

\

In the general case $n\ge 2$, we have to estimate
\[
\int_{2^m}^{2^{m+1}}
\Big(\sum_k |a_k|^2\big|J_{k-1+\frac{n}{2}}(r)\big|^2
\Big)^{q/2}
r^{(1-n/2)q+n-1}\; dr
\bigg)^{1/q},
\]
that with the previous argument is
\[
\lesssim
2^{m(n-2)(-q/2+1)}\big(
2^{m(4-q)/2}
+
2^{m(4-q)/3}
\big)
\Big(
\sum_k |a_k|^2
\Big)^{q/2}
\]
and it is enough to note that the resulting exponent in the powers of 2 is negative for $q>2n/(n-1)$.
\end{proof}

\section{Appendix: Remarks on $A_p$ weights}

Throughout this paper we have taken advantage of the fact that 
\[
 \big(M\omega^s(x)\big)^{1/s},
\qquad 1<s<\infty,
\]
is a weight in the class $A_1$, for every integrable function $\omega$, whose $A_p$ bounds are estimated independently of $\omega$.

This property was discovered (see \cite{13}) with the disc multiplier problem in mind, as an efficient manner of relating functionally the boundedness properties of singular integrals to maximal functions involving different directions of $\R^n$.

To our knowledge, the family $\big(M\omega^s(x)\big)^{1/s}$ is the more extended class of known weights; the other known examples given by powers $|x|^\alpha$ which have also played an important role in several proofs of this paper. It is then interesting to realize that those power weights  $|x|^\alpha$ can also be considered particular cases of the construction $\big(M\omega^s(x)\big)^{1/s}$. In the following we will present the details in dimension one, leaving the general case as an exercise. 

\

\begin{lemma}
\[
 M\Big(\big(M\omega^s\big)^{1/s}\Big)(x)
\lesssim_s
 \big(M\omega^s(x)\big)^{1/s}.
\]
\end{lemma}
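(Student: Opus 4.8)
The plan is to reduce the composed maximal function back to a single maximal function of $\omega^s$, using the well-known fact that $\big(M\omega^s\big)^{1/s}$ is an $A_1$ weight with constant depending only on $s$ (which is exactly the result of \cite{13} that has been used repeatedly above). Recall that an $A_1$ weight $v$ satisfies $Mv(x)\lesssim v(x)$; applying this directly with $v=\big(M\omega^s\big)^{1/s}$ would give the claim immediately. However, that would be circular phrasing — the content of \cite{13} is precisely the $A_1$ property — so the point of this lemma is presumably to give an elementary, self-contained argument. Accordingly, I would argue by hand.

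First I would fix an interval $I\ni x$ and estimate the average $\frac{1}{|I|}\int_I \big(M\omega^s(y)\big)^{1/s}\,dy$. The key geometric observation is that for $y\in I$ one can split the maximal function $M\omega^s(y)$ according to whether the averaging interval $J\ni y$ is small (say $J\subset 3I$) or large ($J\not\subset 3I$, hence $|J|\gtrsim |I|$ and $J\subset CI_J$ for a dilate comparable to $J$ itself centered near $x$). For the ``large'' contribution, any such average of $\omega^s$ over $J$ is bounded by a constant times $M\omega^s(x)$, since $J$ together with $x$ sits inside an interval of comparable length; so that piece contributes $\lesssim M\omega^s(x) \le \big(M\omega^s(x)\big)$, giving the desired bound after taking the $1/s$ power. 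For the ``local'' contribution we are left with $\frac{1}{|I|}\int_I \big(M(\omega^s\chi_{3I})(y)\big)^{1/s}\,dy$, and here I would invoke the weak-type $(1,1)$ bound for $M$: the function $M(\omega^s\chi_{3I})$ lies in weak-$L^1$ with norm $\lesssim \int_{3I}\omega^s$, so its $L^{1/s}$ average over $I$ (note $1/s<1$) is controlled, via the standard computation $\frac{1}{|I|}\int_I h^{1/s} \lesssim \big(\frac{1}{|I|}\|h\|_{L^{1,\infty}(I)}\big)^{1/s}$ valid for $1/s<1$, by $\big(\frac{1}{|I|}\int_{3I}\omega^s\big)^{1/s}\lesssim \big(M\omega^s(x)\big)^{1/s}$.

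Combining the two pieces and taking the supremum over all intervals $I$ containing $x$ yields $M\big(\big(M\omega^s\big)^{1/s}\big)(x)\lesssim_s \big(M\omega^s(x)\big)^{1/s}$, with the implicit constant depending only on $s$ (through the exponent $1/s<1$ in the weak-type estimate and through fixed dimensional/geometric constants). The one-dimensional case is stated; the $n$-dimensional version is identical with intervals replaced by cubes or balls, which is why it is left as an exercise.

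The main obstacle — really the only subtle point — is the passage $\frac{1}{|I|}\int_I h^{1/s}\lesssim \big(\frac{1}{|I|}\|h\|_{L^{1,\infty}(I)}\big)^{1/s}$ for the local term: one must be careful that this uses $1/s<1$ in an essential way (it is false for exponents $\ge 1$), and that the weak-$(1,1)$ constant for $M(\omega^s\chi_{3I})$ is absolute. Everything else is bookkeeping with dilates of $I$. It is worth noting explicitly that this argument also re-derives, in one line, the statement that $|x|^\alpha$ is an $A_1$ weight for $-1<\alpha\le 0$ and an $A_p$ weight for $-1<\alpha<p-1$ in general, by choosing $\omega^s(y)=|y|^{\alpha s}$ with $\alpha s>-1$, since then $M\omega^s(x)\approx |x|^{\alpha s}$ and the lemma gives $M(|x|^\alpha)\lesssim |x|^\alpha$ — which is the promised deduction of the power-weight theory from the $\big(M\omega^s\big)^{1/s}$ construction.
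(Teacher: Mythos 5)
Your argument is correct and is essentially the same as the paper's: both rest on splitting the maximal function into a local part controlled via Kolmogorov's inequality (the $L^1\to L^p$, $p<1$, bound for the restricted maximal operator, which needs $1/s<1$) and a global part whose averages are pointwise comparable to $M\omega^s(x)$. The only cosmetic difference is that you truncate the function ($\omega^s\chi_{3I}$ versus the tail) while the paper partitions the cube $Q=E\cup(Q\setminus E)$ according to where the supremum is locally attained; the two bookkeeping schemes are interchangeable.
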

\begin{proof}
Let $Q$ be a cube containing the point $x$ and let us denote by $M_{Q^*}$ the maximal operator \lq\lq restricted'' to subcubes of $Q^*$ (the double of $Q$).
It is well-known
 that the mapping $f\mapsto M_{Q^*}f$ is bounded from $L^1(Q^*)$ to $L^p(Q^*)$, $p<1$.
We have the splitting
\[
 Q=E\cup(Q\setminus E)
\]
where
\[
 M\omega^s\big|_E
=
 M_{Q^*}\omega^s\big|_E,
\qquad
 M\omega^s\big|_{Q\setminus E}
\approx \text{constant},
\]
i.e. In $Q\setminus E$, $\max M\omega^s(x)\le C\min M\omega^s(x)$ for some universal constant $C$.
Therefore
\begin{eqnarray*}
&& \frac{1}{|Q|}
\int_Q
\big(
M\omega^s(y)
\big)^{1/s}\; dy
\\
&&
\qquad\ \qquad
=
 \frac{1}{|Q|}
\int_E
\big(
M_{Q^*}\omega^s(y)
\big)^{1/s}\; dy
+
\frac{1}{|Q|}
\int_{Q\setminus E}
\big(
M\omega^s(y)
\big)^{1/s}\; dy
\\
&&
\qquad\ \qquad
\lesssim_s
\Big(
 \frac{1}{|Q|}
\int_{Q^*}
\omega^s(y)\; dy
\Big)^{1/s}
+
\sup_{y\in Q\setminus E}
\big(
M\omega^s(y)
\big)^{1/s}
\\
&&
\qquad\ \qquad
\lesssim_s
\big(
M\omega^s(x)
\big)^{1/s}
+
\inf_{y\in Q\setminus E}
\big(
M\omega^s(y)
\big)^{1/s}
\end{eqnarray*}
which is $\lesssim_s
\big(
M\omega^s(x)
\big)^{1/s}$.
\end{proof}

\begin{corollary}
 $|x|^\alpha$ is in the class $A_p$ if and only if $-1<\alpha<p-1$.
\end{corollary}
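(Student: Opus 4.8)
The plan is to deduce the corollary from the preceding Lemma together with two classical, elementary facts about $A_p$ weights: first, that $A_1\subset A_p$ for every $p>1$ (immediate from the definitions, since $\omega\in A_1$ means $M\omega\lesssim\omega$, which forces the $A_p$ condition); and second, that $A_p$ is \emph{closed under inversion} in the sense that $\omega\in A_p$ if and only if $\omega^{-1/(p-1)}\in A_{p'}$, where $1/p+1/p'=1$. With these in hand, the necessity direction ($-1<\alpha<p-1$) I would handle directly by testing the $A_p$ condition on intervals of the form $(0,\varepsilon)$ or $(-\varepsilon,\varepsilon)$: integrability of $|x|^\alpha$ near $0$ forces $\alpha>-1$, and integrability of $|x|^{-\alpha/(p-1)}$ near $0$ forces $-\alpha/(p-1)>-1$, i.e. $\alpha<p-1$; one also checks the supremum over all intervals is finite, which is a short scaling computation.

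For the sufficiency direction I would split into the two sub-ranges $-1<\alpha\le 0$ and $0\le\alpha<p-1$, and show each time that $|x|^\alpha$ arises as $\big(M\omega^s\big)^{1/s}$ for a suitable choice of $\omega$ and $s$, so that the Lemma gives $M\big(|x|^\alpha\big)\lesssim_s |x|^\alpha$, i.e. $|x|^\alpha\in A_1\subset A_p$. Concretely, for $-1<\alpha\le 0$ one wants $M(\omega^s)(x)\approx |x|^{\alpha s}$; taking $\omega^s$ to be (comparable to) $|x|^{\alpha s}$ itself works provided $\alpha s>-1$, i.e. provided $s<1/|\alpha|$ when $\alpha<0$, because then $|x|^{\alpha s}$ is locally integrable and its maximal function is comparable to itself — so choose any $s$ with $1<s<1/|\alpha|$ (and $s$ arbitrary if $\alpha=0$). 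This already shows every $|x|^\alpha$ with $-1<\alpha\le 0$ is an $A_1$ weight, hence in $A_p$ for all $p>1$, which is consistent with the claimed range since $\alpha\le 0<p-1$.

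The remaining case $0<\alpha<p-1$ I would obtain by duality: set $\beta=-\alpha/(p-1)$, so that $-1<\beta<0$ precisely when $0<\alpha<p-1$; by the previous paragraph $|x|^\beta\in A_1\subset A_{p'}$, and then the inversion property for $A_p$ weights gives $|x|^\alpha=\big(|x|^\beta\big)^{-(p-1)}\in A_p$. Assembling the pieces: $-1<\alpha\le 0$ gives $A_1\subset A_p$ directly, $0<\alpha<p-1$ gives $A_p$ by duality, and conversely the endpoint tests rule out $\alpha\le-1$ and $\alpha\ge p-1$. The main obstacle, and the only point requiring a little care, is the verification that for $\alpha s>-1$ one genuinely has $M\big(|x|^{\alpha s}\big)(x)\lesssim_s |x|^{\alpha s}$ uniformly on $\R$ (not merely near the origin): one must separately estimate the average of $|y|^{\alpha s}$ over an interval $I\ni x$ in the two regimes where $I$ is "small compared to $\mathrm{dist}(0,I)$" and where $I$ "contains or nearly contains $0$," and check both are $\lesssim_s|x|^{\alpha s}$ using $\alpha s>-1$; this is the scaling computation underpinning everything and is exactly the kind of estimate the Lemma was designed to package, so once it is noted the rest is formal.
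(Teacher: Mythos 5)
Your proposal is correct, and for the range $-1<\alpha\le 0$ it coincides with the paper's argument: both realize $|x|^{\alpha}$ as (a constant multiple of) $\big(M(|x|^{\alpha s})\big)^{1/s}$ with $\alpha s>-1$ and invoke the Lemma to place it in $A_1\subset A_p$; the two-regime scaling check you flag at the end is exactly the displayed inequality in the paper. For $0<\alpha<p-1$ the routes genuinely diverge. The paper proves that the Hilbert transform is bounded on $L^p(|x|^{\alpha}\,dx)$ by dualizing the norm, moving $H$ onto $g(x)|x|^{\alpha}$, and applying the already-established case to the weight $|x|^{-\alpha q/p}=|x|^{-\alpha/(p-1)}\in(-1,0)$ in exponent; it then appeals to the characterization of $A_p$ via weighted boundedness of $H$ (or $M$) to conclude membership in $A_p$. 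You instead use the inversion symmetry $\omega\in A_p\iff \omega^{-1/(p-1)}\in A_{q}$ ($1/p+1/q=1$), which is immediate from the defining supremum condition, applied to $|x|^{\beta}$ with $\beta=-\alpha/(p-1)\in(-1,0)$. Your version is more self-contained: it never leaves the definition of $A_p$ and avoids invoking the Muckenhoupt/Hunt--Muckenhoupt--Wheeden characterization. The paper's version buys something extra in the spirit of the article: it directly establishes the weighted $L^p$ inequality for $H$ with the power weight, which is the form in which the fact is actually used elsewhere in the paper. Both arguments hinge on the same numerical observation, namely that $-\alpha/(p-1)$ falls back into the already-settled range $(-1,0]$. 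Your necessity check by testing the two averages on intervals adjacent to the origin is standard and fills in a direction the paper's proof leaves implicit.
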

\begin{proof}
In the interval $-1<\alpha\le 0$ we just observe that $f(x)=|x|^\alpha$ satisfies
\[
\Big(
\frac{1}{1+s\alpha}
\Big)^{1/s}|x|^\alpha\le
 \big(Mf^s(x)\big)^{1/s}
\le
\Big(
\frac{2}{|x|}
\int_0^{|x|}
t^{\alpha s}\; dt
\Big)^{1/s}
\le
\Big(
\frac{2}{1+s\alpha}
\Big)^{1/s}|x|^\alpha
\]
under
the hypothesis that $s\alpha>-1$. Thus, from the previous lemma, we see that the weight is in $A_1\subset A_p$.

For $0\le\alpha<p-1$, we use the characterization of $A_p$ in terms of the boundedness of the maximal function or the Hilbert transform. We have
\[
 \Big(
\int \big|Hf(x)\big|^p |x|^\alpha\; dx
\big)^{1/p}
=
\sup
\int Hf(x)g(x)|x|^\alpha\; dx
\]
where the supremum is taken over all $g$ such that 
\[
 \int \big|g(x)\big|^q|x|^\alpha\; dx\le 1,
\qquad\frac 1p+\frac 1q=1.
\]
But we also have
\begin{equation}\label{wap}
 \Big|
\int Hf(x)g(x)|x|^\alpha\; dx
 \Big|
= 
\Big|
\int f(x)H\big(g(x)|x|^\alpha\big)\; dx
 \Big|.
\end{equation}
This quantity is bounded by
\[
 \Big(
\int |f(x)|^p|x|^\alpha\; dx
 \Big)^{1/p}
 \Big(
\int \big|H(g(x)|x|^\alpha)\big|^q|x|^{-\alpha q/p}\; dx
 \Big)^{1/q}.
\]
Then we observe
\[
 0>-\alpha\frac qp=-\alpha\frac 1p \frac{p}{p-1}>-1.
\]
Therefore, using the previous case,  \eqref{wap} is 
\[
 \lesssim
 \Big(
\int |f(x)|^p|x|^\alpha\; dx
 \Big)^{1/p}
 \Big(
\int \big|g(x)\big|^q|x|^\alpha\; dx
 \Big)^{1/q},
\]
and we are done.
\end{proof}


\


\end{document}